
\documentclass[10pt]{article}%
\usepackage{amsfonts}
\usepackage{amsmath,amssymb}
\usepackage{amsmath}
\usepackage{amssymb}
\usepackage{graphicx}%
\setcounter{MaxMatrixCols}{30}
\providecommand{\U}[1]{\protect\rule{.1in}{.1in}}
\parskip=5pt
\textheight21cm
\parskip=5pt
\newtheorem{theorem}{Theorem}[section]

\newtheorem{definition}[theorem]{Definition}
\newtheorem{example}[theorem]{Example}

\newtheorem{remark}[theorem]{Remark}
\newenvironment{proof}[1][Proof]{\noindent \textbf{#1.} }{\  $\Box$}
\numberwithin{equation}{section}
\begin{document}

\title{A note on pricing of contingent claims under $G$-expectation}
\author{Mingshang Hu \thanks{School of Mathematics, Shandong University,
humingshang@sdu.edu.cn. Research supported by the National Natural Science
Foundation of China (11201262)}
\and Shaolin Ji\thanks{Qilu Institute of Finance, Shandong University,
jsl@sdu.edu.cn }}
\maketitle
\date{}

\textbf{Abstract}. In this paper, we study the pricing of contingent claims
under $G$-expectation. In order to accomodate volatility uncertainty, the
price of the risky security is supposed to governed by a general linear
stochastic differential equation (SDE) driven by $G$-Brownian motion.
Utilizing the recently developed results of Backward SDE driven by
$G$-Brownian motion, we obtain the superhedging and suberhedging prices of a
given contingent claim. Explicit results in the Markovian case are also derived.

\textbf{Key words}: Pricing of contingent claims, volatility uncertainty,
$G$-Brownian motion, Backward SDEs

\textbf{MSC-classification}: 60H30, 91G20

\section{Introduction}

It is well known that the Black-Scholes formula depends on the underlying
volatility. Since it is difficult to forecast the prospective volatility
process in practice, it is natural to permit volatility uncertainty in
contingent claim pricing models (see \cite{Avel1995}).

Motivated by measuring risk and other financial problems of volatility
uncertainty, Peng \cite{P07a} introduced the notion of sublinear expectation
space, which is a generalization of probability space. As a typical case, Peng
studied a fully nonlinear expectation, called $G$-expectation $\mathbb{\hat
{E}}\mathcal{[\cdot]}$ (see \cite{P10} and the references therein), and the
corresponding time-conditional expectation $\mathbb{\hat{E}}_{t}%
\mathcal{[\cdot]}$ on a space of random variables completed under the norm
$\mathbb{\hat{E}}[|\cdot|^{p}]^{1/p}$. Under this $G$-expectation framework
($G$-framework for short) a new type of Brownian motion called $G$-Brownian
motion was constructed. The stochastic calculus with respect to the
$G$-Brownian motion has been established. For a recent account and development
of $G$-expectation theory and its applications we refer the reader to
\cite{EJ-1, EJ-2,Nu, Peng2004, Peng2005, PengICM2010,PSZ2012, STZ, Song11,
Song12}.

There are other recent advances and their applications in stochastic calculus
which consists of mutually singular probability measures. For instance, Denis
and Martini \cite{DenisMartini2006} developed quasi-sure stochastic analysis
and Soner et al. \cite{STZ11} have obtained a deep result of existence and
uniqueness theorem of 2BSDE. Various stochastic control (game) problems and
the applications in finance are studied in \cite{MPP, MPZ, NN, Nutz, NZ}.

In this paper, we suppose that there are a riskless asset a risky security in
a financial market. Different from the existing literatures (see \cite{Bei,
EJ-2, vorbrink, XSZ}), the price $S_{t}$ to the risky security is governed by%
\[
dS_{t}=\eta_{t}S_{t}dt+\mu_{t}S_{t}d\langle B\rangle_{t}+\sigma_{t}S_{t}%
dB_{t}\text{,}%
\]
where $B$ is a $G$-Brownian motion. For a given contingent claim $\xi\in
L_{G}^{2}(\Omega)$ with maturity time $T$, we obtain its superhedging and
suberhedging prices. Explicit results in the Markovian case are also derived.
Our study bases on the recently developed BSDE driven by $G$-Brownian motion
in \cite{HJPS} and \cite{HJPS-1}:%
\begin{align*}
Y_{t}  &  =\xi+\int_{t}^{T}f(s,Y_{s},Z_{s})ds+\int_{t}^{T}g(s,Y_{s}%
,Z_{s})d\langle B\rangle_{s}\\
&  -\int_{t}^{T}Z_{s}dB_{s}-(K_{T}-K_{t}).
\end{align*}
We mainly utilize the existence and uniqueness theorem in \cite{HJPS} and some
important properties such as comparison theorem, Feynman-Kac formula and
Girsanov transformation in \cite{HJPS-1}.

The paper is organized as follows. In section 2, we formulate our contingent
claim pricing problem. The main results are given in section 3. In the
Appendix, we  present some fundamental results on $G$-expectation theory and
give proofs of the comparison theorem of SDE driven by $G$-Brownian motion and
the Girsanov transformation in our context.

\section{Statement of the problem}

There are a riskless asset with return $r_{t}$ and a risky security in a
financial market. The price $S_{t}$ to the risky securities is given by%
\begin{equation}
dS_{t}=\eta_{t}S_{t}dt+\mu_{t}S_{t}d\langle B\rangle_{t}+\sigma_{t}S_{t}%
dB_{t}\text{, }t\leq T\text{,}\label{Rt}%
\end{equation}
where $(\eta_{t})$, $(\mu_{t})$, $\left(  \sigma_{t}\right)  $ and $\left(
\sigma_{t}^{-1}\right)  $ are all bounded processes in $M_{G}^{2}(0,T)$. The
readers may refer to the Appendix to find the basic definitions and
fundamental results in the $G$-framework.

We denote the wealth process by $(Y_{t})$ and the amount of money invested in
the security by $(\psi_{t})$ at time $t$. Then the wealth process follows%
\begin{equation}
dY_{t}=r_{t}Y_{t}dt+\psi_{t}[(\eta_{t}-r_{t})dt+\mu_{t}d\langle B\rangle
_{t}]+\psi_{t}\sigma_{t}dB_{t}\text{.}\; \label{wealth-1}%
\end{equation}
Set $Z_{t}=\psi_{t}\sigma_{t}$, $b_{t}=\sigma_{t}^{-1}(\eta_{t}-r_{t})$ and
$d_{t}=\sigma_{t}^{-1}\mu_{t}$. Then (\ref{wealth-1}) becomes%
\begin{equation}
dY_{t}=r_{t}Y_{t}dt+b_{t}Z_{t}dt+d_{t}Z_{t}d\langle B\rangle_{t}+Z_{t}%
dB_{t}\text{,}\; \label{wealth}%
\end{equation}
here $Z$ is called the portfolio. In this note, we suppose that every $Z\in
M_{G}^{2}(0,T)$ is an admissible portfolio.

At the initial time $\tau\in\lbrack0,T]$, consider an investor with initial
wealth $\eta\in L_{G}^{2}(\Omega_{\tau})$ and denote by $Y^{\eta,Z,\tau}$ the
unique solution of the following SDE:%
\begin{equation}
\left\{
\begin{array}
[c]{l}%
dY_{t}^{\eta,Z,\tau}=r_{t}Y_{t}^{\eta,Z,\tau}dt+b_{t}Z_{t}dt+d_{t}%
Z_{t}d\langle B\rangle_{t}+Z_{t}dB_{t}\text{, }t\in\lbrack\tau,T]\text{,}\\
Y_{\tau}^{\eta,Z,\tau}=\eta\text{,}%
\end{array}
\right.  \text{ } \label{wealth-2}%
\end{equation}
where $Z\in M_{G}^{2}(\tau,T)$ is a given portfolio. For a contingent claim
$\xi\in L_{G}^{\beta}(\Omega_{T})$ with $\beta>2$, we define the superhedging
set
\[
\mathcal{U}_{\tau}=\{ \eta\in L_{G}^{2}(\Omega_{\tau}):\exists Z\in M_{G}%
^{2}(\tau,T)\text{ such that }Y_{T}^{\eta,Z,\tau}\geq\xi\text{, q.s.}\}
\]
and the \emph{superhedging price} $\overline{S}_{\tau}=ess\inf\{ \eta:\eta
\in\mathcal{U}_{\tau}\}$. Similarly define the subhedging set%
\[
\mathcal{L}_{\tau}=\{ \eta\in L_{G}^{2}(\Omega_{\tau}):\exists Z\in M_{G}%
^{2}(0,T)\text{ such that }Y_{T}^{-\eta,Z,\tau}\geq-\xi\text{, q.s.}\}
\]
and the \emph{subhedging price} $\underline{S}_{\tau}=ess\sup\{ \eta:\eta
\in\mathcal{L}_{\tau}\}$.

\begin{remark}
For $\tau=0$, $\mathcal{U}_{0}\subset\mathbb{R}$, thus $\overline{S}_{0}%
=\inf\{y\in\mathbb{R}:y\in\mathcal{U}_{0}\}$ is well defined. For $\tau>0$,
$\overline{S}_{\tau}=ess\inf\{ \eta:\eta\in\mathcal{U}_{\tau}\}$ is defined in
the following sense:

\begin{description}
\item[(1)] $\overline{S}_{\tau}\in L_{G}^{2}(\Omega_{\tau})$;

\item[(2)] For each $\eta\in\mathcal{U}_{\tau}$, we have $\eta\geq\overline
{S}_{\tau}$ q.s.;

\item[(3)] If $\zeta\in L_{G}^{2}(\Omega_{\tau})$ such that $\zeta\leq\eta$
q.s. for each $\eta\in\mathcal{U}_{\tau}$, then $\overline{S}_{\tau}\geq\zeta$ q.s..
\end{description}

In this note, we will show that $\overline{S}_{\tau}$ is well-posed which is
non-trivial due to the non-dominated probability measures in $\mathcal{P}$.
Similarly, $\underline{S}_{\tau}$ is well defined.
\end{remark}

\section{Main results}

\bigskip

\subsection{State price process}

We consider the following $G$-BSDE:%
\begin{equation}
Y_{t}=\xi-\int_{t}^{T}(r_{s}Y_{s}+b_{s}Z_{s})ds-\int_{t}^{T}d_{s}Z_{s}d\langle
B\rangle_{s}-\int_{t}^{T}Z_{s}dB_{s}-(K_{T}-K_{t})\text{, }t\leq T\text{.}
\label{main-BSDE}%
\end{equation}
In order to introduce the state price process which can be used to solve the
$G$-BSDE (\ref{main-BSDE}), we construct an auxiliary extended $\tilde{G}%
$-expectation space $(\tilde{\Omega}_{T},L_{\tilde{G}}^{2}(\tilde{\Omega}%
_{T}),\mathbb{\hat{E}}^{\tilde{G}})$ with $\tilde{\Omega}_{T}=C_{0}%
([0,T],\mathbb{R}^{2})$ and%

\[
\tilde{G}(A)=\frac{1}{2}\sup_{\underline{\sigma}^{2}\leq v\leq\bar{\sigma}%
^{2}}\mathrm{tr}\left[  A\left[
\begin{array}
[c]{cc}%
v & 1\\
1 & v^{-1}%
\end{array}
\right]  \right]  ,\ A\in\mathbb{S}_{2}.
\]
Let $\{(B_{t},\tilde{B}_{t})\}$ be the canonical process in the extended space
(see \cite{HJPS-1}). Note that $\langle B,\tilde{B}\rangle_{t}=t$.

By the state price process we mean the unique solution $\pi=\left(  \pi
_{t}\right)  $ to
\begin{equation}
d\pi_{t}/\pi_{t}=-r_{t}dt-b_{t}d\tilde{B}_{t}-d_{t}dB_{t}\text{,\ \ }\pi
_{0}=1\text{,} \label{Pi}%
\end{equation}
which admits a closed form (see \cite{HJPS-1}): for $0\leq t\leq T,$%
\[%
\begin{array}
[c]{rl}%
\pi_{t}= & \exp\{-\int\nolimits_{0}^{t}(r_{s}+b_{s}d_{s})ds\}\exp
\{-\int\nolimits_{0}^{t}b_{s}d\tilde{B}_{s}-\tfrac{1}{2}\int\nolimits_{0}%
^{t}b_{s}^{2}d\langle\tilde{B}\rangle_{s}\}\\
& \exp\{-\int\nolimits_{0}^{t}d_{s}dB_{s}-\tfrac{1}{2}\int\nolimits_{0}%
^{t}d_{s}^{2}d\langle B\rangle_{s}\}\text{.}%
\end{array}
\]
By applying It\^{o}'s formula to $\pi_{t}Y_{t}$, we obtain
\begin{equation}
Y_{t}=\mathbb{\hat{E}}_{t}^{\tilde{G}}[\frac{\pi_{T}}{\pi_{t}}\xi]\text{,
}t\leq T\text{.} \label{main-rel}%
\end{equation}

\subsection{Hedging prices}

\begin{theorem}
[Hedging prices]\label{thm-hedge} Let $\xi\in L_{G}^{2}(\Omega_{T})$ be a
contingent claim. Suppose that $(r_{t})$, $(\eta_{t})$, $(\mu_{t})$, $\left(
\sigma_{t}\right)  $ and $\left(  \sigma_{t}^{-1}\right)  $ are bounded
processes in $M_{G}^{2}(0,T)$. Then the superhedging and subhedging prices at
any time $\tau$ are given by
\[
\overline{S}_{\tau}=\mathbb{\hat{E}}_{\tau}^{\tilde{G}}[\frac{\pi_{T}}%
{\pi_{\tau}}\xi]
\]
and
\[
\underline{S}_{\tau}=-\mathbb{\hat{E}}_{\tau}^{\tilde{G}}[-\frac{\pi_{T}}%
{\pi_{\tau}}\xi]\text{.}%
\]

\end{theorem}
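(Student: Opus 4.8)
The plan is to show that the superhedging price at $\tau$ coincides with $Y_{\tau}$, the time-$\tau$ value of the solution $(Y,Z,K)$ of the $G$-BSDE (\ref{main-BSDE}), which by (\ref{main-rel}) equals $\mathbb{\hat{E}}_{\tau}^{\tilde{G}}[\frac{\pi_{T}}{\pi_{\tau}}\xi]$, and then to deduce the subhedging formula by applying the superhedging result to the claim $-\xi$. Throughout I use that the $G$-martingale $K$ in (\ref{main-BSDE}) is non-increasing with $K_{0}=0$ (so $dK\leq0$), and that $(Y,Z)$ lie in the $S_{G}^{\alpha}$, $H_{G}^{\alpha}$ spaces (with $\alpha$ close to $2$) furnished by the existence and uniqueness theorem of \cite{HJPS}; the standing assumptions (boundedness of the coefficients and $\xi\in L_{G}^{\beta}$, $\beta>2$, in the sense of Section~2) are what make these spaces rich enough, and the statement for $\xi\in L_{G}^{2}$ is then obtained by the usual approximation. \emph{Feasibility.} First I would take $\eta=Y_{\tau}$ and use the portfolio $Z$ from (\ref{main-BSDE}) on $[\tau,T]$. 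Writing (\ref{main-BSDE}) in forward form shows that $Y$ satisfies the wealth dynamics (\ref{wealth-2}) \emph{plus} the term $dK_{t}$, so $\Delta_{t}:=Y_{t}^{Y_{\tau},Z,\tau}-Y_{t}$ solves the linear equation $d\Delta_{t}=r_{t}\Delta_{t}\,dt-dK_{t}$ with $\Delta_{\tau}=0$, whence $\Delta_{t}=-\int_{\tau}^{t}e^{\int_{s}^{t}r_{u}du}\,dK_{s}\geq0$ because $dK\leq0$. In particular $Y_{T}^{Y_{\tau},Z,\tau}\geq Y_{T}=\xi$ q.s., i.e. $Y_{\tau}\in\mathcal{U}_{\tau}$. (One point to be checked here is that $Z$, a priori only in $H_{G}^{\alpha}$, is an admissible portfolio in $M_{G}^{2}(\tau,T)$.)

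\emph{Optimality.} For $\eta\in\mathcal{U}_{\tau}$, pick a witnessing portfolio $Z'$ with $Y_{T}^{\eta,Z',\tau}\geq\xi$ q.s. The key computation is It\^{o}'s formula for $\pi_{t}Y_{t}^{\eta,Z',\tau}$ in the extended $\tilde{G}$-space: using (\ref{Pi}), (\ref{wealth-2}) and $\langle B,\tilde{B}\rangle_{t}=t$, the $dt$- and $d\langle B\rangle_{t}$-terms cancel and one is left with
\[
d\big(\pi_{t}Y_{t}^{\eta,Z',\tau}\big)=\pi_{t}\big(Z'_{t}-d_{t}Y_{t}^{\eta,Z',\tau}\big)\,dB_{t}-\pi_{t}b_{t}Y_{t}^{\eta,Z',\tau}\,d\tilde{B}_{t},
\]
so $(\pi_{t}Y_{t}^{\eta,Z',\tau})_{t\in[\tau,T]}$ is a symmetric $\tilde{G}$-martingale, granted the two integrands lie in a suitable $M_{\tilde{G}}^{q}$ space with $q>1$. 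Hence $\pi_{\tau}\eta=\mathbb{\hat{E}}_{\tau}^{\tilde{G}}[\pi_{T}Y_{T}^{\eta,Z',\tau}]\geq\mathbb{\hat{E}}_{\tau}^{\tilde{G}}[\pi_{T}\xi]$ by monotonicity (using $\pi_{T}>0$), and dividing by the positive $\mathcal{F}_{\tau}$-measurable random variable $\pi_{\tau}$ and pulling it inside the conditional $\tilde{G}$-expectation gives $\eta\geq\mathbb{\hat{E}}_{\tau}^{\tilde{G}}[\frac{\pi_{T}}{\pi_{\tau}}\xi]=Y_{\tau}$ q.s. Together with the feasibility step this verifies the three requirements in the Remark (membership in $L_{G}^{2}(\Omega_{\tau})$, the lower bound, and attainability), so $\overline{S}_{\tau}=Y_{\tau}=\mathbb{\hat{E}}_{\tau}^{\tilde{G}}[\frac{\pi_{T}}{\pi_{\tau}}\xi]$; the case $\tau=0$ is immediate since $\mathcal{U}_{0}\subset\mathbb{R}$ and $\pi_{0}=1$. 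For the subhedging price I would note that $\eta\in\mathcal{L}_{\tau}$ iff $-\eta$ is an initial wealth superhedging $-\xi$, so the superhedging formula applied to $-\xi$ yields $-\eta\geq\mathbb{\hat{E}}_{\tau}^{\tilde{G}}[-\frac{\pi_{T}}{\pi_{\tau}}\xi]$ for all $\eta\in\mathcal{L}_{\tau}$ with equality attained, i.e. $\underline{S}_{\tau}=-\mathbb{\hat{E}}_{\tau}^{\tilde{G}}[-\frac{\pi_{T}}{\pi_{\tau}}\xi]$.

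The main obstacle is not the (routine) It\^{o} algebra or the cancellations above, but the integrability bookkeeping: one must check that $\pi_{T}\xi$, $\pi_{T}Y_{T}^{\eta,Z',\tau}$ and the It\^{o} integrands belong to the $L_{\tilde{G}}^{p}$ and $M_{\tilde{G}}^{q}$ spaces in which the $G$-It\^{o} formula, the chain ``stochastic integral $\Rightarrow$ symmetric $G$-martingale $\Rightarrow$ tower property'', and the identity $\mathbb{\hat{E}}_{\tau}[\pi_{\tau}^{-1}X]=\pi_{\tau}^{-1}\mathbb{\hat{E}}_{\tau}[X]$ are all legitimate. This is what forces the $\beta>2$ hypothesis (room for the H\"older estimates, since $\pi_{T},\pi_{\tau}^{-1}\in L_{\tilde{G}}^{p}$ for every $p<\infty$), an approximation step to pass to $\xi\in L_{G}^{2}$, and the reconciliation of the BSDE's $Z\in H_{G}^{\alpha}$ with the admissible class $M_{G}^{2}$.
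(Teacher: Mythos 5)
Your proof is correct in structure and reaches the theorem by a route that is half the same as the paper's and half genuinely different. The feasibility step is essentially the paper's Step 2: the paper starts the wealth process from $\tilde{Y}_{\tau}$ with portfolio $\tilde{Z}$ and invokes the comparison theorem for SDEs proved in its Appendix, whereas you exploit the fact that the two dynamics differ only by $dK_{t}$ (same portfolio, so the $b_{s}Z_{s}ds$, $d_{s}Z_{s}d\langle B\rangle_{s}$ and $Z_{s}dB_{s}$ terms cancel exactly) and integrate the resulting linear equation $d\Delta_{t}=r_{t}\Delta_{t}dt-dK_{t}$ explicitly; this is a more elementary and self-contained way to get $\Delta_{T}\geq0$, and it avoids the Appendix entirely. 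The optimality step is where you diverge: the paper observes that the wealth process $Y^{\eta,Z,\tau}$, read backward from $Y_{T}^{\eta,Z,\tau}$, is itself the solution of the $G$-BSDE (\ref{main-BSDE}) with $K\equiv0$, and then applies the comparison theorem for $G$-BSDEs of \cite{HJPS-1} to the pair of terminal values $Y_{T}^{\eta,Z,\tau}\geq\xi$; you instead apply It\^{o}'s formula to $\pi_{t}Y_{t}^{\eta,Z',\tau}$ in the extended space, obtain a symmetric $\tilde{G}$-martingale, and conclude by monotonicity of $\mathbb{\hat{E}}_{\tau}^{\tilde{G}}$. Your cancellation computation is correct and your route has the virtue of needing only the martingale property of stochastic integrals rather than the (deep) $G$-BSDE comparison theorem; the price is exactly the integrability bookkeeping you flag: $\pi$ is unbounded and $Z'$ is a priori only in $M_{G}^{2}$, so showing $\pi(Z'-dY)$ and $\pi bY$ lie in an $H_{\tilde{G}}^{q}$ space with $q>1$ requires H\"{o}lder with exponents barely above $1$, and pulling the unbounded $\pi_{\tau}^{-1}$ through $\mathbb{\hat{E}}_{\tau}^{\tilde{G}}$ needs an approximation beyond the standard statement for bounded $\mathcal{F}_{\tau}$-measurable factors. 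The paper sidesteps all of this by outsourcing the analysis to \cite{HJPS-1} (and, it should be said, its own derivation of (\ref{main-rel}) by ``applying It\^{o}'s formula to $\pi_{t}Y_{t}$'' glosses over the same estimates). Your two flagged caveats --- reconciling $Z\in H_{G}^{\alpha}$ with the admissible class $M_{G}^{2}$, and the tension between $\xi\in L_{G}^{2}$ in the theorem statement versus $\xi\in L_{G}^{\beta}$, $\beta>2$, in Section 2 --- are real, are resolved by the Section 2 hypothesis ($\beta>2$ lets one take $\alpha=2$ in Theorem \ref{the4.1}), and are not addressed any more carefully by the paper itself. The reduction of the subhedging formula to the superhedging one applied to $-\xi$ matches the paper.
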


\textbf{Proof.} By the definition of subhedging price, it is easy to get
$\underline{S}_{\tau}$ from the superhedging price $\overline{S}_{\tau}$. Thus
we only need to prove the superhedging price.

\medskip

\noindent\textbf{Step 1:} We first show that for any $\eta\in\mathcal{U}%
_{\tau}$,
\[
\eta\geq\mathbb{\hat{E}}_{\tau}^{\tilde{G}}[\frac{\pi_{T}}{\pi_{\tau}}%
\xi]\text{, q.s..}%
\]

\noindent If $\eta\in\mathcal{U}_{\tau}$, then there exists a $Z\in M_{G}%
^{2}(\tau,T)$ such that $Y_{T}^{\eta,Z,\tau}\geq\xi$. Thus%
\begin{equation}
Y_{t}^{\eta,Z,\tau}=Y_{T}^{\eta,Z,\tau}-\int_{t}^{T}(r_{s}Y_{s}^{\eta,Z,\tau
}+b_{s}Z_{s})ds-\int_{t}^{T}d_{s}Z_{s}d\langle B\rangle_{s}-\int_{t}^{T}%
Z_{s}dB_{s}\text{, }t\in\lbrack\tau,T]\text{.} \label{3-1}%
\end{equation}
Let $(\bar{Y}_{t},\bar{Z}_{t},\bar{K}_{t})_{t\leq T}$ be the solution of the
$G$-BSDE (\ref{main-BSDE}) corresponding to the terminal value $Y_{T}%
^{\eta,Z,\tau}$. Then by (\ref{3-1}) we get $(\bar{Y}_{t},\bar{Z}_{t},\bar
{K}_{t})=(Y_{t}^{\eta,Z,\tau},Z_{t},0)$ for $t\in\lbrack\tau,T]$. Let
$(\tilde{Y}_{t},\tilde{Z}_{t},\tilde{K}_{t})_{t\leq T}$ be the solution of the
$G$-BSDE (\ref{main-BSDE}) corresponding to the terminal value $\xi$. Then by
(\ref{main-rel}) we have $\tilde{Y}_{t}=\mathbb{\hat{E}}_{t}^{\tilde{G}}%
[\frac{\pi_{T}}{\pi_{t}}\xi]$ for $t\leq T$. Note that $Y_{T}^{\eta,Z,\tau
}\geq\xi$, then by the comparison theorem of $G$-BSDEs (see \cite{HJPS-1}) we
obtain%
\[
\bar{Y}_{\tau}=Y_{\tau}^{\eta,Z,\tau}=\eta\geq\tilde{Y}_{\tau}=\mathbb{\hat
{E}}_{\tau}^{\tilde{G}}[\frac{\pi_{T}}{\pi_{\tau}}\xi]\text{, q.s..}%
\]
\noindent\textbf{Step 2: }We now prove that $\mathbb{\hat{E}}_{\tau}%
^{\tilde{G}}[\frac{\pi_{T}}{\pi_{\tau}}\xi]=\tilde{Y}_{\tau}\in$
$\mathcal{U}_{\tau}$.

For this purpose, we consider the following wealth process $(\hat{Y}%
_{t})_{t\in\lbrack\tau,T]}$ with the initial wealth $\tilde{Y}_{\tau}$ and
portfolio $\tilde{Z}$:%
\[
\hat{Y}_{t}=\tilde{Y}_{\tau}+\int_{\tau}^{t}(r_{s}\hat{Y}_{s}+b_{s}\tilde
{Z}_{s})ds+\int_{\tau}^{t}d_{s}\tilde{Z}_{s}d\langle B\rangle_{s}+\int_{\tau
}^{t}\tilde{Z}_{s}dB_{s}\text{, }t\in\lbrack\tau,T]\text{.}%
\]
On the other hand, $(\tilde{Y}_{t},\tilde{Z}_{t},\tilde{K}_{t})_{t\leq T}$ is
the solution of the $G$-BSDE (\ref{main-BSDE}) corresponding to the terminal
value $\xi$. Thus we get%
\[
\tilde{Y}_{t}=\tilde{Y}_{\tau}+\int_{\tau}^{t}(r_{s}\tilde{Y}_{s}+b_{s}%
\tilde{Z}_{s})ds+\int_{\tau}^{t}d_{s}\tilde{Z}_{s}d\langle B\rangle_{s}%
+\int_{\tau}^{t}\tilde{Z}_{s}dB_{s}+\tilde{K}_{t}-\tilde{K}_{\tau}\text{,
}t\in\lbrack\tau,T]\text{.}%
\]
Note that $\tilde{K}$ is a decreasing process, then by the comparison theorem
of SDE (see Appendix) we obtain $\hat{Y}_{T}\geq\tilde{Y}_{T}=\xi$ q.s., which
implies that $\tilde{Y}_{\tau}\in$ $\mathcal{U}_{\tau}$.

This completes the proof. $\blacksquare$

\begin{remark}
In the special case where $\xi$ can be perfectly hedged, that is, there exist
$y$ and $Z$ such that $Y_{T}^{y,Z,0}=\xi$, then
\[
\overline{S}_{0}=\underline{S}_{0}=\mathbb{\hat{E}}_{t}^{\tilde{G}}[\frac
{\pi_{T}}{\pi_{\tau}}\xi]=-\mathbb{\hat{E}}_{t}^{\tilde{G}}[-\frac{\pi_{T}%
}{\pi_{\tau}}\xi]\text{.}%
\]

\end{remark}

\begin{remark}
Vorbrink (2010) obtains a characterization of hedging prices under
$G$-expectation. However, in place of our assumption, he adopts the strong
assumption that $\eta_{t}=r_{t}$ and $\mu_{t}=0$, so $\pi_{t}=\exp
\{-\int\nolimits_{0}^{t}r_{s}ds\}$.
\end{remark}

\bigskip

\subsection{ Some special cases}

Suppose that $(r_{t})$, $\left(  \sigma_{t}\right)  $ and $\left(  \sigma
_{t}^{-1}\right)  $ are deterministic continuous functions on the time
interval $[0,T]$. $\xi=\Phi(S_{T})$ is a contingent claim, where
$\Phi:\mathbb{R}\rightarrow\mathbb{R}$ is a local Lipschitz function, i.e.,
there exist a constant $L>0$ and an positive integer $m$ such that%
\[
|\Phi(x)-\Phi(x^{\prime})|\leq L(1+|x|^{m}+|x^{\prime}|^{m})|x-x^{\prime}|.
\]
We consider the following $G$-BSDEs:%
\begin{equation}
Y_{t}=\Phi(S_{T})-\int_{t}^{T}(r_{s}Y_{s}+b_{s}Z_{s})ds-\int_{t}^{T}d_{s}%
Z_{s}d\langle B\rangle_{s}-\int_{t}^{T}Z_{s}dB_{s}-(K_{T}-K_{t}),
\label{sup-bsde}%
\end{equation}%
\begin{equation}
\bar{Y}_{t}=-\Phi(S_{T})-\int_{t}^{T}(r_{s}\bar{Y}_{s}+b_{s}\bar{Z}%
_{s})ds-\int_{t}^{T}d_{s}\bar{Z}_{s}d\langle B\rangle_{s}-\int_{t}^{T}\bar
{Z}_{s}dB_{s}-(\bar{K}_{T}-\bar{K}_{t}). \label{sub-bsde}%
\end{equation}
By (\ref{main-rel}) and Theorem \ref{thm-hedge}, we have $\overline{S}_{\tau
}=Y_{\tau}$ and $\underline{S}_{\tau}=-\bar{Y}_{\tau}$.

By applying It\^{o}'s formula to $\exp\{-\int\nolimits_{0}^{t}r_{s}ds\}Y_{t}$,
we obtain that $\tilde{Y}_{t}=\exp\{-\int\nolimits_{0}^{t}r_{s}ds\}Y_{t}$,
$\tilde{Z}_{t}=\exp\{-\int\nolimits_{0}^{t}r_{s}ds\}Z_{t}$ and $\tilde{K}%
_{t}=\int_{0}^{t}\exp\{-\int\nolimits_{0}^{u}r_{s}ds\}dK_{u}$ is the solution
of the following $G$-BSDE:%
\begin{equation}
\tilde{Y}_{t}=\exp\{-\int\nolimits_{0}^{T}r_{s}ds\} \Phi(S_{T})-\int_{t}%
^{T}b_{s}\tilde{Z}_{s}ds-\int_{t}^{T}d_{s}\tilde{Z}_{s}d\langle B\rangle
_{s}-\int_{t}^{T}\tilde{Z}_{s}dB_{s}-(\tilde{K}_{T}-\tilde{K}_{t}).
\label{3-2}%
\end{equation}
By the Girsanov transformation (see Appendix), we can define a consistent
sublinear expectation $(\mathbb{\tilde{E}}_{t}[\cdot])_{t\leq T}$ such that
$\tilde{B}_{t}=B_{t}+\int_{0}^{t}b_{s}ds+\int_{0}^{t}d_{s}d\langle
B\rangle_{s}$ is a $G$-Brownian motion and $\tilde{K}_{t}$ is a martingale
under $\mathbb{\tilde{E}}$. Thus equation (\ref{3-2}) becomes%
\begin{equation}
\tilde{Y}_{t}+(\tilde{K}_{T}-\tilde{K}_{t})=\exp\{-\int\nolimits_{0}^{T}%
r_{s}ds\} \Phi(S_{T})-\int_{t}^{T}\tilde{Z}_{s}d\tilde{B}_{s}. \label{3-3}%
\end{equation}
Taking $\mathbb{\tilde{E}}_{t}$ on both sides of equation (\ref{3-3}), we
obtain%
\begin{align*}
\tilde{Y}_{t}  &  =\mathbb{\tilde{E}}_{t}[\exp\{-\int\nolimits_{0}^{T}%
r_{s}ds\} \Phi(S_{T})]\\
&  =\exp\{-\int\nolimits_{0}^{T}r_{s}ds\} \mathbb{\tilde{E}}_{t}[\Phi
(S_{T})]\\
&  =\exp\{-\int\nolimits_{0}^{T}r_{s}ds\} \mathbb{\tilde{E}}_{t}[\Phi
(S_{t}\exp(\int_{t}^{T}r_{s}ds-\frac{1}{2}\int_{t}^{T}\sigma_{s}^{2}d\langle
B\rangle_{s}+\int_{t}^{T}\sigma_{s}d\tilde{B}_{s}))]\\
&  =\exp\{-\int\nolimits_{0}^{T}r_{s}ds\} \mathbb{\tilde{E}}_{t}[\Phi
(S_{t}\exp(\int_{t}^{T}r_{s}ds-\frac{1}{2}\int_{t}^{T}\sigma_{s}^{2}%
d\langle\tilde{B}\rangle_{s}+\int_{t}^{T}\sigma_{s}d\tilde{B}_{s}))]\\
&  =\exp\{-\int\nolimits_{0}^{T}r_{s}ds\} \mathbb{\tilde{E}}_{t}[\Phi
(x\exp(\int_{t}^{T}r_{s}ds-\frac{1}{2}\int_{t}^{T}\sigma_{s}^{2}d\langle
\tilde{B}\rangle_{s}+\int_{t}^{T}\sigma_{s}d\tilde{B}_{s}))]_{x=S_{t}}\\
&  =\exp\{-\int\nolimits_{0}^{T}r_{s}ds\} \mathbb{\tilde{E}}[\Phi(x\exp
(\int_{t}^{T}r_{s}ds-\frac{1}{2}\int_{t}^{T}\sigma_{s}^{2}d\langle\tilde
{B}\rangle_{s}+\int_{t}^{T}\sigma_{s}d\tilde{B}_{s}))]_{x=S_{t}}\\
&  =\exp\{-\int\nolimits_{0}^{T}r_{s}ds\} \mathbb{\hat{E}}[\Phi(x\exp(\int%
_{t}^{T}r_{s}ds-\frac{1}{2}\int_{t}^{T}\sigma_{s}^{2}d\langle B\rangle
_{s}+\int_{t}^{T}\sigma_{s}dB_{s}))]_{x=S_{t}}.
\end{align*}
Thus we can get the following theorem.

\begin{theorem}
Suppose that $(r_{t})$, $\left(  \sigma_{t}\right)  $ and $\left(  \sigma
_{t}^{-1}\right)  $ are deterministic continuous functions on the time
interval $[0,T]$. Let $\xi=\Phi(S_{T})$ be a contingent claim, where
$\Phi:\mathbb{R}\rightarrow\mathbb{R}$ is a local Lipschitz function. Then%
\begin{equation}
\overline{S}_{\tau}=\exp\{-\int\nolimits_{\tau}^{T}r_{s}ds\} \mathbb{\hat{E}%
}[\Phi(x\exp(\int_{\tau}^{T}r_{s}ds-\frac{1}{2}\int_{\tau}^{T}\sigma_{s}%
^{2}d\langle B\rangle_{s}+\int_{\tau}^{T}\sigma_{s}dB_{s}))]_{x=S_{t}}
\label{3-4}%
\end{equation}
and%
\begin{equation}
\underline{S}_{\tau}=-\exp\{-\int\nolimits_{\tau}^{T}r_{s}ds\} \mathbb{\hat
{E}}[-\Phi(x\exp(\int_{\tau}^{T}r_{s}ds-\frac{1}{2}\int_{\tau}^{T}\sigma
_{s}^{2}d\langle B\rangle_{s}+\int_{\tau}^{T}\sigma_{s}dB_{s}))]_{x=S_{t}}.
\label{3-5}%
\end{equation}

\end{theorem}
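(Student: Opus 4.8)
The plan is to specialize Theorem~\ref{thm-hedge} to the Markovian contingent claim $\xi = \Phi(S_T)$ and then evaluate the conditional $\tilde G$-expectation $\mathbb{\hat E}^{\tilde G}_\tau[\tfrac{\pi_T}{\pi_\tau}\xi]$ in closed form. Concretely, I would first pass to the discounted $G$-BSDE: applying It\^o's formula to $\exp\{-\int_0^t r_s ds\}Y_t$ (with $(Y,Z,K)$ the solution of~(\ref{sup-bsde})) turns it into~(\ref{3-2}), which has no driver term linear in $\tilde Y$. This is the reduction that makes the Girsanov step available. Since $(r_t)$, $(\sigma_t)$, $(\sigma_t^{-1})$ are deterministic and continuous, $b_t = \sigma_t^{-1}(\eta_t - r_t)$ and $d_t = \sigma_t^{-1}\mu_t$ are bounded processes in $M_G^2(0,T)$, so the Girsanov transformation in the Appendix applies and produces a consistent sublinear expectation $(\mathbb{\tilde E}_t)_{t\le T}$ under which $\tilde B_t = B_t + \int_0^t b_s ds + \int_0^t d_s d\langle B\rangle_s$ is a $G$-Brownian motion and $\tilde K$ is a $\mathbb{\tilde E}$-martingale; equation~(\ref{3-2}) then collapses to~(\ref{3-3}), and taking $\mathbb{\tilde E}_t$ gives $\tilde Y_t = \mathbb{\tilde E}_t[\exp\{-\int_0^T r_s ds\}\Phi(S_T)]$.

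Next I would compute $S_T$ in terms of $S_t$ and the $\tilde B$-increments. Solving the SDE~(\ref{Rt}) for $S$ (a linear SDE driven by $G$-Brownian motion; cf.\ the closed form for $\pi$) gives $S_T = S_t \exp\!\big(\int_t^T \eta_s ds + \int_t^T(\mu_s - \tfrac12\sigma_s^2)d\langle B\rangle_s + \int_t^T \sigma_s dB_s\big)$. Rewriting the $dB$ and $d\langle B\rangle$ integrals via $dB_s = d\tilde B_s - b_s ds - d_s d\langle B\rangle_s$ and using $b_s\sigma_s = \eta_s - r_s$, $d_s\sigma_s = \mu_s$, the drift-in-$ds$ terms combine to $\int_t^T r_s ds$ and the $d\langle B\rangle$ terms to $-\tfrac12\int_t^T \sigma_s^2 d\langle B\rangle_s$, so $S_T = S_t\exp\!\big(\int_t^T r_s ds - \tfrac12\int_t^T \sigma_s^2 d\langle B\rangle_s + \int_t^T \sigma_s d\tilde B_s\big)$, exactly as in the displayed chain of equalities preceding the theorem. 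Here one uses the key fact $\langle B,\tilde B\rangle_t = t$ together with $\langle \tilde B\rangle_t = \langle B\rangle_t$ in the extended/transformed space to replace $d\langle B\rangle_s$ by $d\langle \tilde B\rangle_s$ inside the expectation.

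Then I would invoke the Markovian structure: $\Phi$ is locally Lipschitz with polynomial growth, so $\Phi(S_T) \in L^\beta_{\tilde G}$ for suitable $\beta$, and the independence of the increments $(\tilde B_s - \tilde B_t)_{s\ge t}$ from $\tilde\Omega_t$ under $\mathbb{\tilde E}$ lets me replace $\mathbb{\tilde E}_t[\Phi(\cdot)]$ by the deterministic function $x \mapsto \mathbb{\tilde E}[\Phi(x\exp(\cdots))]$ evaluated at $x = S_t$ (the usual ``freezing'' step, justified by the definition of $\mathbb{\tilde E}_t$ via conditional $G$-expectation). Finally, since $\tilde B$ is a $G$-Brownian motion under $\mathbb{\tilde E}$ and $\int_t^T\sigma_s d\tilde B_s$, $\int_t^T\sigma_s^2 d\langle\tilde B\rangle_s$ have the same $G$-distribution as $\int_t^T\sigma_s dB_s$, $\int_t^T\sigma_s^2 d\langle B\rangle_s$ under $\mathbb{\hat E}$, I can rewrite the answer back in terms of the original $\mathbb{\hat E}$ and $B$, giving~(\ref{3-4}); applying the same argument to~(\ref{sub-bsde}) with terminal value $-\Phi(S_T)$ and using $\underline{S}_\tau = -\bar Y_\tau$ yields~(\ref{3-5}).

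The main obstacle I anticipate is the rigorous justification of the ``freezing'' step and of the distributional identifications in the last two paragraphs: one must check that $\Phi(S_T)$ and the relevant stochastic integrals lie in the right $L^p_{\tilde G}$ (resp.\ $L^p_G$) spaces so that conditional $G$-expectations are well defined and continuous, that the increments of $\tilde B$ after time $t$ are genuinely independent of $\tilde\Omega_t$ under the transformed expectation $\mathbb{\tilde E}$ (this relies on the Girsanov statement in the Appendix preserving the $G$-Brownian-motion property, not merely the martingale property), and that the law of $\int_t^T \sigma_s d\tilde B_s$ under $\mathbb{\tilde E}$ coincides with that of $\int_t^T \sigma_s dB_s$ under $\mathbb{\hat E}$ — all of which hinge on $\sigma$ being deterministic so that these are, up to time change, $G$-normal. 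Everything else is a routine It\^o-formula computation already carried out in the displayed array before the theorem statement.
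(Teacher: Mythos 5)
Your proposal is correct and follows essentially the same route as the paper: discounting via It\^o's formula to reach (\ref{3-2}), applying the Girsanov transformation from the Appendix to obtain (\ref{3-3}), solving the price SDE explicitly in terms of $\tilde B$, and then using the freezing step and the distributional identification of $(\tilde B,\langle\tilde B\rangle)$ under $\mathbb{\tilde E}$ with $(B,\langle B\rangle)$ under $\mathbb{\hat E}$ — exactly the displayed chain of equalities preceding the theorem. The technical points you flag (integrability of $\Phi(S_T)$, independence of increments under $\mathbb{\tilde E}$, and $\langle\tilde B\rangle=\langle B\rangle$) are precisely the ones the paper handles via Theorem \ref{the4.1}, the Girsanov theorem in the Appendix, and the local Lipschitz/polynomial growth assumption on $\Phi$.
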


For each $(\tau,x)\in\lbrack0,T]\times\mathbb{R}$, we set%
\begin{equation}
u(\tau,x)=\exp\{-\int\nolimits_{\tau}^{T}r_{s}ds\} \mathbb{\hat{E}}[\Phi
(x\exp(\int_{\tau}^{T}r_{s}ds-\frac{1}{2}\int_{\tau}^{T}\sigma_{s}^{2}d\langle
B\rangle_{s}+\int_{\tau}^{T}\sigma_{s}dB_{s})). \label{3-6}%
\end{equation}

Then $\overline{S}_{\tau}=u(\tau,S_{t})$ and $u$ is the unique viscosity
solution of the following PDE (see Theorem 4.5 in \cite{HJPS-1}):%
\[
\left\{
\begin{array}
[c]{l}%
\partial_{t}u+G((\sigma_{t}x)^{2}\partial_{xx}^{2}u)+r_{t}x\partial_{x}%
u-r_{t}u=0,\\
u(T,x)=\Phi(x).
\end{array}
\right.
\]

\begin{example}
\label{example-hedging} We study the super and subhedging prices of a European
call option. Let the parameters in equation (\ref{Rt}) be constants, i.e.
\[
\eta_{t}:=\eta,\; \mu_{t}:=\mu\; \text{and }\sigma_{t}:=1.
\]
Then the price process $\left(  S_{t}\right)  $ becomes
\[
dS_{t}=\eta S_{t}dt+\mu S_{t}d\langle B\rangle_{t}+S_{t}dB_{t}\text{.}%
\]
Suppose further that $r_{t}\equiv r$ and $r$ is a constant. Thus $b_{t}%
=\eta-r,$ $d_{t}=\mu$ and the state price is%
\[
\pi_{t}=\exp\{-\mu(\eta-r)t\} \exp\{-rt-(\eta-r)\tilde{B}_{t}-\tfrac{1}%
{2}(\eta-r)^{2}\langle\tilde{B}\rangle_{t}\} \exp\{-\mu B_{t}-\tfrac{1}{2}%
\mu^{2}\langle B\rangle_{t}\} \text{.}%
\]

Consider a European call option on the risky security that matures at date $T$
and has exercise price $K$. The super and subhedging prices at $t$ can be
written in the form $\overline{c}(S_{t},t)$ and $\underline{c}(S_{t},t)$
respectively. At the maturity date,
\[
\overline{c}(S_{T},T)=\underline{c}(S_{T},T)=\max[0,S_{T}-K]\equiv\Phi
(S_{T}).
\]

By Theorem \ref{thm-hedge},
\[
\overline{c}(S_{t},t)=\mathbb{\hat{E}}_{t}^{\tilde{G}}[\frac{\pi_{T}}{\pi_{t}%
}\Phi(S_{T})]
\]
and
\[
\underline{c}(S_{t},t)=-\mathbb{\hat{E}}_{t}^{\tilde{G}}[-\frac{\pi_{T}}%
{\pi_{t}}\Phi(S_{T})]\text{.}%
\]
By the PDE approach, we obtain the following equations:
\[
\partial_{t}\overline{c}+\sup_{\underline{\sigma}^{2}\leq v\leq\overline
{\sigma}^{2}}\{ \tfrac{1}{2}vS^{2}\partial_{SS}^{2}\overline{c}\}+rS\partial
_{S}\overline{c}-r\overline{c}=0,\; \overline{c}(S,T)=\Phi(S)
\]
and%
\[
\partial_{t}\underline{c}-\sup_{\underline{\sigma}^{2}\leq v\leq
\overline{\sigma}^{2}}\{-\tfrac{1}{2}vS^{2}\partial_{SS}^{2}\underline{c}%
\}+rS\partial_{S}\underline{c}-r\underline{c}=0,\; \underline{c}%
(S,T)=\Phi(S).
\]
Because $\Phi(\cdot)$ is convex, so is $\overline{c}(\cdot,t)$. It follows
that the respective suprema in the above equations are achieved at
$\overline{\sigma}^{2}$ and \underline{$\sigma$}$^{2}$, and we obtain%
\[
\partial_{t}\overline{c}+\tfrac{1}{2}\overline{\sigma}^{2}S^{2}\partial
_{SS}^{2}\overline{c}+rS\partial_{S}\overline{c}-r\overline{c}=0,\;
\overline{c}(S,T)=\Phi(S)
\]
and%
\[
\partial_{t}\underline{c}+\tfrac{1}{2}\underline{\sigma}^{2}S^{2}\partial
_{SS}^{2}\underline{c}+rS\partial_{S}\underline{c}-r\underline{c}=0,\;
\underline{c}(S,T)=\Phi(S).
\]
Therefore,
\[
\overline{c}(S_{t},t)=E^{P^{\overline{\sigma}}}[\frac{\pi_{T}}{\pi_{t}}%
\Phi(S_{T}))\mid\mathcal{F}_{t}]
\]
and
\[
\underline{c}(S_{t},t)=E^{P^{\underline{\sigma}}}[\frac{\pi_{T}}{\pi_{t}}%
\Phi(S_{T})\mid\mathcal{F}_{t}]\text{.}%
\]
In other words, the super and subhedging prices are the Black-Scholes prices
with volatilities $\overline{\sigma}$ and $\underline{\sigma}$
respectively.{\LARGE \ }
\end{example}

\begin{remark}
In the above example, we find that the super and subhedging prices are
independent of $\eta$ and $\mu$.
\end{remark}

\bigskip

\section{Appendix}

We review some basic notions and results of $G$-expectation, the related
spaces of random variables and the backward stochastic differential equations
driven by a $G$-Browninan motion. The readers may refer to \cite{HJPS},
\cite{P07a}, \cite{P07b}, \cite{P08a}, \cite{P08b}, \cite{P10} for more details.

\begin{definition}
\label{def2.1} Let $\Omega$ be a given set and let $\mathcal{H}$ be a vector
lattice of real valued functions defined on $\Omega$, namely $c\in\mathcal{H}$
for each constant $c$ and $|X|\in\mathcal{H}$ if $X\in\mathcal{H}$.
$\mathcal{H}$ is considered as the space of random variables. A sublinear
expectation $\mathbb{\hat{E}}$ on $\mathcal{H}$ is a functional $\mathbb{\hat
{E}}:\mathcal{H}\rightarrow\mathbb{R}$ satisfying the following properties:
for all $X,Y\in\mathcal{H}$, we have

\begin{description}
\item[(a)] Monotonicity: If $X\geq Y$ then $\mathbb{\hat{E}}[X]\geq
\mathbb{\hat{E}}[Y]$;

\item[(b)] Constant preservation: $\mathbb{\hat{E}}[c]=c$;

\item[(c)] Sub-additivity: $\mathbb{\hat{E}}[X+Y]\leq\mathbb{\hat{E}%
}[X]+\mathbb{\hat{E}}[Y]$;

\item[(d)] Positive homogeneity: $\mathbb{\hat{E}}[\lambda X]=\lambda
\mathbb{\hat{E}}[X]$ for each $\lambda\geq0$.
\end{description}

$(\Omega,\mathcal{H},\mathbb{\hat{E}})$ is called a sublinear expectation space.
\end{definition}

\begin{definition}
\label{def2.2} Let $X_{1}$ and $X_{2}$ be two $n$-dimensional random vectors
defined respectively in sublinear expectation spaces $(\Omega_{1}%
,\mathcal{H}_{1},\mathbb{\hat{E}}_{1})$ and $(\Omega_{2},\mathcal{H}%
_{2},\mathbb{\hat{E}}_{2})$. They are called identically distributed, denoted
by $X_{1}\overset{d}{=}X_{2}$, if $\mathbb{\hat{E}}_{1}[\varphi(X_{1}%
)]=\mathbb{\hat{E}}_{2}[\varphi(X_{2})]$, for all$\ \varphi\in C_{l.Lip}%
(\mathbb{R}^{n})$, where $C_{l.Lip}(\mathbb{R}^{n})$ is the space of real
continuous functions defined on $\mathbb{R}^{n}$ such that
\[
|\varphi(x)-\varphi(y)|\leq C(1+|x|^{k}+|y|^{k})|x-y|\ \text{\ for
all}\ x,y\in\mathbb{R}^{n},
\]
where $k$ and $C$ depend only on $\varphi$.
\end{definition}

\begin{definition}
\label{def2.3} In a sublinear expectation space $(\Omega,\mathcal{H}%
,\mathbb{\hat{E}})$, a random vector $Y=(Y_{1},\cdot\cdot\cdot,Y_{n})$,
$Y_{i}\in\mathcal{H}$, is said to be independent of another random vector
$X=(X_{1},\cdot\cdot\cdot,X_{m})$, $X_{i}\in\mathcal{H}$ under $\mathbb{\hat
{E}}[\cdot]$, denoted by $Y\bot X$, if for every test function $\varphi\in
C_{l.Lip}(\mathbb{R}^{m}\times\mathbb{R}^{n})$ we have $\mathbb{\hat{E}%
}[\varphi(X,Y)]=\mathbb{\hat{E}}[\mathbb{\hat{E}}[\varphi(x,Y)]_{x=X}]$.
\end{definition}

\begin{definition}
\label{def2.4} ($G$-normal distribution) A $d$-dimensional random vector
$X=(X_{1},\cdot\cdot\cdot,X_{d})$ in a sublinear expectation space
$(\Omega,\mathcal{H},\mathbb{\hat{E}})$ is called $G$-normally distributed if
for each $a,b\geq0$ we have
\[
aX+b\bar{X}\overset{d}{=}\sqrt{a^{2}+b^{2}}X,
\]
where $\bar{X}$ is an independent copy of $X$, i.e., $\bar{X}\overset{d}{=}X$
and $\bar{X}\bot X$. Here the letter $G$ denotes the function
\[
G(A):=\frac{1}{2}\mathbb{\hat{E}}[\langle AX,X\rangle]:\mathbb{S}%
_{d}\rightarrow\mathbb{R},
\]
where $\mathbb{S}_{d}$ denotes the collection of $d\times d$ symmetric matrices.
\end{definition}

Peng \cite{P08b} showed that $X=(X_{1},\cdot\cdot\cdot,X_{d})$ is $G$-normally
distributed if and only if for each $\varphi\in C_{l.Lip}(\mathbb{R}^{d})$,
$u(t,x):=\mathbb{\hat{E}}[\varphi(x+\sqrt{t}X)]$, $(t,x)\in\lbrack
0,\infty)\times\mathbb{R}^{d}$, is the solution of the following $G$-heat
equation:%
\[
\partial_{t}u-G(D_{x}^{2}u)=0,\ u(0,x)=\varphi(x).
\]

The function $G(\cdot):\mathbb{S}_{d}\rightarrow\mathbb{R}$ is a monotonic,
sublinear mapping on $\mathbb{S}_{d}$ and $G(A)=\frac{1}{2}\mathbb{\hat{E}%
}[\langle AX,X\rangle]\leq\frac{1}{2}|A|\mathbb{\hat{E}}[|X|^{2}]=:\frac{1}%
{2}|A|\bar{\sigma}^{2}$ implies that there exists a bounded, convex and closed
subset $\Gamma\subset\mathbb{S}_{d}^{+}$ such that
\[
G(A)=\frac{1}{2}\sup_{\gamma\in\Gamma}\mathrm{tr}[\gamma A],
\]
where $\mathbb{S}_{d}^{+}$ denotes the collection of nonnegative elements in
$\mathbb{S}_{d}$.

In this paper, we only consider non-degenerate $G$-normal distribution, i.e.,
there exists some $\underline{\sigma}^{2}>0$ such that $G(A)-G(B)\geq
\underline{\sigma}^{2}\mathrm{tr}[A-B]$ for any $A\geq B$.

\begin{definition}
\label{def2.5} i) Let $\Omega_{T}=C_{0}([0,T];\mathbb{R}^{d})$, the space of
real valued continuous functions on $[0,T]$ with $\omega_{0}=0$, be endowed
with the supremum norm and let $B_{t}(\omega)=\omega_{t}$ be the canonical
process. Set
\[
\mathcal{H}_{T}^{0}:=\{ \varphi(B_{t_{1}},...,B_{t_{n}}):n\geq1,t_{1}%
,...,t_{n}\in\lbrack0,T],\varphi\in C_{l.Lip}(\mathbb{R}^{d\times n})\}.
\]
Let $G:\mathbb{S}_{d}\rightarrow\mathbb{R}$ be a given monotonic and sublinear
function. $G$-expectation is a sublinear expectation defined by
\[
\mathbb{\hat{E}}[X]=\mathbb{\tilde{E}}[\varphi(\sqrt{t_{1}-t_{0}}\xi_{1}%
,\cdot\cdot\cdot,\sqrt{t_{m}-t_{m-1}}\xi_{m})],
\]
for all $X=\varphi(B_{t_{1}}-B_{t_{0}},B_{t_{2}}-B_{t_{1}},\cdot\cdot
\cdot,B_{t_{m}}-B_{t_{m-1}})$, where $\xi_{1},\cdot\cdot\cdot,\xi_{n}$ are
identically distributed $d$-dimensional $G$-normally distributed random
vectors in a sublinear expectation space $(\tilde{\Omega},\tilde{\mathcal{H}%
},\mathbb{\tilde{E}})$ such that $\xi_{i+1}$ is independent of $(\xi_{1}%
,\cdot\cdot\cdot,\xi_{i})$ for every $i=1,\cdot\cdot\cdot,m-1$. The
corresponding canonical process $B_{t}=(B_{t}^{i})_{i=1}^{d}$ is called a
$G$-Brownian motion.

ii) Let us define the conditional $G$-expectation $\mathbb{\hat{E}}_{t}$ of
$\xi\in\mathcal{H}_{T}^{0}$ knowing $\mathcal{H}_{t}^{0}$, for $t\in
\lbrack0,T]$. Without loss of generality we can assume that $\xi$ has the
representation $\xi=\varphi(B_{t_{1}}-B_{t_{0}},B_{t_{2}}-B_{t_{1}},\cdot
\cdot\cdot,B_{t_{m}}-B_{t_{m-1}})$ with $t=t_{i}$, for some $1\leq i\leq m$,
and we put
\[
\mathbb{\hat{E}}_{t_{i}}[\varphi(B_{t_{1}}-B_{t_{0}},B_{t_{2}}-B_{t_{1}}%
,\cdot\cdot\cdot,B_{t_{m}}-B_{t_{m-1}})]
\]%
\[
=\tilde{\varphi}(B_{t_{1}}-B_{t_{0}},B_{t_{2}}-B_{t_{1}},\cdot\cdot
\cdot,B_{t_{i}}-B_{t_{i-1}}),
\]
where
\[
\tilde{\varphi}(x_{1},\cdot\cdot\cdot,x_{i})=\mathbb{\hat{E}}[\varphi
(x_{1},\cdot\cdot\cdot,x_{i},B_{t_{i+1}}-B_{t_{i}},\cdot\cdot\cdot,B_{t_{m}%
}-B_{t_{m-1}})].
\]

\end{definition}

Define $\Vert\xi\Vert_{p,G}=(\mathbb{\hat{E}}[|\xi|^{p}])^{1/p}$ for $\xi
\in\mathcal{H}_{T}^{0}$ and $p\geq1$. Then \textmd{for all}$\ t\in\lbrack
0,T]$, $\mathbb{\hat{E}}_{t}[\cdot]$ is a continuous mapping on $\mathcal{H}%
_{T}^{0}$ w.r.t. the norm $\Vert\cdot\Vert_{1,G}$. Therefore it can be
extended continuously to the completion $L_{G}^{1}(\Omega_{T})$ of
$\mathcal{H}_{T}^{0}$ under the norm $\Vert\cdot\Vert_{1,G}$.

Let $L_{ip}(\Omega_{T}):=\{ \varphi(B_{t_{1}},...,B_{t_{n}}):n\geq
1,t_{1},...,t_{n}\in\lbrack0,T],\varphi\in C_{b.Lip}(\mathbb{R}^{d\times
n})\},$ where $C_{b.Lip}(\mathbb{R}^{d\times n})$ denotes the set of bounded
Lipschitz functions on $\mathbb{R}^{d\times n}$. Denis et al. \cite{DHP11}
proved that the completions of $C_{b}(\Omega_{T})$ (the set of bounded
continuous function on $\Omega_{T}$), $\mathcal{H}_{T}^{0}$ and $L_{ip}%
(\Omega_{T})$ under $\Vert\cdot\Vert_{p,G}$ are the same and we denote them by
$L_{G}^{p}(\Omega_{T})$.

For each fixed $\mathbf{a}\in\mathbb{R}^{d}$, $B_{t}^{\mathbf{a}}%
=\langle\mathbf{a},B_{t}\rangle$ is a $1$-dimensional $G_{\mathbf{a}}%
$-Brownian motion, where $G_{\mathbf{a}}(\alpha)=\frac{1}{2}(\sigma
_{\mathbf{aa}^{T}}^{2}\alpha^{+}-\sigma_{-\mathbf{aa}^{T}}^{2}\alpha^{-})$,
$\sigma_{\mathbf{aa}^{T}}^{2}=2G(\mathbf{aa}^{T})$, $\sigma_{-\mathbf{aa}^{T}%
}^{2}=-2G(-\mathbf{aa}^{T})$. Let $\pi_{t}^{N}=\{t_{0}^{N},\cdots,t_{N}^{N}%
\}$, $N=1,2,\cdots$, be a sequence of partitions of $[0,t]$ such that $\mu
(\pi_{t}^{N})=\max\{|t_{i+1}^{N}-t_{i}^{N}|:i=0,\cdots,N-1\} \rightarrow0$,
the quadratic variation process of $B^{\mathbf{a}}$ is defined by%
\[
\langle B^{\mathbf{a}}\rangle_{t}=\lim_{\mu(\pi_{t}^{N})\rightarrow0}%
\sum_{j=0}^{N-1}(B_{t_{j+1}^{N}}^{\mathbf{a}}-B_{t_{j}^{N}}^{\mathbf{a}}%
)^{2}.
\]
For each fixed $\mathbf{a}$, $\mathbf{\bar{a}}\in\mathbb{R}^{d}$, the mutual
variation process of $B^{\mathbf{a}}$ and $B^{\mathbf{\bar{a}}}$ is defined by%
\[
\langle B^{\mathbf{a}},B^{\mathbf{\bar{a}}}\rangle_{t}=\frac{1}{4}[\langle
B^{\mathbf{a}+\mathbf{\bar{a}}}\rangle_{t}-\langle B^{\mathbf{a}%
-\mathbf{\bar{a}}}\rangle_{t}].
\]

\begin{definition}
\label{def2.6} Let $M_{G}^{0}(0,T)$ be the collection of processes in the
following form: for a given partition $\{t_{0},\cdot\cdot\cdot,t_{N}\}=\pi
_{T}$ of $[0,T]$,
\[
\eta_{t}(\omega)=\sum_{j=0}^{N-1}\xi_{j}(\omega)I_{[t_{j},t_{j+1})}(t),
\]
where $\xi_{i}\in L_{ip}(\Omega_{t_{i}})$, $i=0,1,2,\cdot\cdot\cdot,N-1$. For
$p\geq1$ and $\eta\in M_{G}^{0}(0,T)$, let $\Vert\eta\Vert_{H_{G}^{p}}=\{
\mathbb{\hat{E}}[(\int_{0}^{T}|\eta_{s}|^{2}ds)^{p/2}]\}^{1/p}$, $\Vert
\eta\Vert_{M_{G}^{p}}=\{ \mathbb{\hat{E}}[\int_{0}^{T}|\eta_{s}|^{p}%
ds]\}^{1/p}$ and denote by $H_{G}^{p}(0,T)$, $M_{G}^{p}(0,T)$ the completions
of $M_{G}^{0}(0,T)$ under the norms $\Vert\cdot\Vert_{H_{G}^{p}}$, $\Vert
\cdot\Vert_{M_{G}^{p}}$ respectively.
\end{definition}

\begin{theorem}
\label{the2.7} (\cite{DHP11,HP09}) There exists a weakly compact set
$\mathcal{P}\subset\mathcal{M}_{1}(\Omega_{T})$, the set of probability
measures on $(\Omega_{T},\mathcal{B}(\Omega_{T}))$, such that
\[
\mathbb{\hat{E}}[\xi]=\sup_{P\in\mathcal{P}}E_{P}[\xi]\ \ \text{for
\ all}\ \xi\in\mathcal{H}_{T}^{0}.
\]
$\mathcal{P}$ is called a set that represents $\mathbb{\hat{E}}$.
\end{theorem}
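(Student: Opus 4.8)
The plan is to realize $\mathcal{P}$ explicitly as the weak closure of a family of laws of controlled stochastic integrals, and then to verify the representation identity together with weak compactness as separate tasks. Fix a Wiener space $(\Omega_{T},\mathcal{F}^{W},P_{0})$ carrying a standard $d$-dimensional Brownian motion $W$ with augmented filtration $\mathbb{F}^{W}$. Since $G(A)=\tfrac12\sup_{\gamma\in\Gamma}\mathrm{tr}[\gamma A]$ for a bounded closed convex $\Gamma\subset\mathbb{S}_{d}^{+}$, which is uniformly elliptic by the non-degeneracy assumption, let $\mathcal{A}_{\Gamma}$ be the set of $\mathbb{F}^{W}$-progressively measurable $d\times d$-matrix valued processes $(\theta_{t})_{t\le T}$ with $\theta_{t}\theta_{t}^{T}\in\Gamma$ a.e. For $\theta\in\mathcal{A}_{\Gamma}$ put $X_{t}^{\theta}=\int_{0}^{t}\theta_{s}\,dW_{s}$ and $P_{\theta}=P_{0}\circ(X_{\cdot}^{\theta})^{-1}\in\mathcal{M}_{1}(\Omega_{T})$, let $\mathcal{P}_{0}=\{P_{\theta}:\theta\in\mathcal{A}_{\Gamma}\}$, and set $\mathcal{P}=\overline{\mathcal{P}_{0}}$ in the weak topology.

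First I would prove the core identity $\mathbb{\hat{E}}[\xi]=\sup_{P\in\mathcal{P}_{0}}E_{P}[\xi]$ for $\xi\in\mathcal{H}_{T}^{0}$. By the definition of $\mathbb{\hat{E}}$ (nested conditioning over a partition, using the tower property of independence) and dynamic programming for the control problem on the right, this reduces by backward induction over the partition — peeling off one time-slot at a time — to the one-step statement that, for $\psi\in C_{l.Lip}(\mathbb{R}^{d})$, $h>0$ and $\xi_{1}$ a $G$-normal random vector,
\[
\mathbb{\hat{E}}[\psi(x+\sqrt{h}\,\xi_{1})]=\sup_{\theta\in\mathcal{A}_{\Gamma}}E_{P_{0}}\big[\psi\big(x+X_{h}^{\theta}\big)\big].
\]
The right-hand side, as a function $w(h,x)$, is the value function of a stochastic control problem whose Hamilton--Jacobi--Bellman equation is $\partial_{t}w-\sup_{A\in\Gamma}\tfrac12\mathrm{tr}[A\,D_{x}^{2}w]=\partial_{t}w-G(D_{x}^{2}w)=0$ with $w(0,\cdot)=\psi$; by Peng's characterization (recalled in the Appendix) the left-hand side solves the same $G$-heat equation, and uniqueness of the viscosity solution closes the identity. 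For the inequality ``$\ge$'' there is a direct argument: with $\Gamma$ uniformly elliptic and $\psi$ Lipschitz, $w$ is $C^{1,2}$, and for fixed $\theta$ an It\^{o} expansion shows $t\mapsto w(h-t,\,x+X_{t}^{\theta})$ is a $P_{0}$-supermartingale, because $\partial_{t}w=G(D^{2}w)\ge\tfrac12\mathrm{tr}[\theta_{t}\theta_{t}^{T}D^{2}w]$ pointwise; the inequality ``$\le$'' requires $\varepsilon$-optimal admissible controls.

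Next I would establish weak compactness. Since $|\theta_{t}|^{2}=\mathrm{tr}[\theta_{t}\theta_{t}^{T}]\le c_{\Gamma}:=\max_{\gamma\in\Gamma}\mathrm{tr}[\gamma]$, the Burkholder--Davis--Gundy inequality yields $E_{P_{\theta}}[\,|\omega_{t}-\omega_{s}|^{4}\,]=E_{P_{0}}[\,|X_{t}^{\theta}-X_{s}^{\theta}|^{4}\,]\le C|t-s|^{2}$ with $C$ independent of $\theta$, so by the Kolmogorov--Chentsov criterion the family $\mathcal{P}_{0}$ (all measures supported on paths through the origin) is tight; by Prokhorov's theorem on the Polish space $\Omega_{T}$ and since $\mathcal{P}=\overline{\mathcal{P}_{0}}$ is weakly closed, $\mathcal{P}$ is weakly compact. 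The same bounds give uniform polynomial moments $\sup_{P\in\mathcal{P}_{0}}E_{P}[\,\sup_{t\le T}|\omega_{t}|^{p}\,]<\infty$ for every $p$, which pass to $\mathcal{P}$ by lower semicontinuity; hence for $\xi=\varphi(B_{t_{1}},\dots,B_{t_{n}})\in\mathcal{H}_{T}^{0}$, whose growth is polynomial, $P\mapsto E_{P}[\xi]$ is weakly continuous on $\mathcal{P}$ after a routine truncation and uniform-integrability argument, so $\sup_{P\in\mathcal{P}}E_{P}[\xi]=\sup_{P\in\mathcal{P}_{0}}E_{P}[\xi]=\mathbb{\hat{E}}[\xi]$, as claimed.

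The step I expect to be the main obstacle is the ``$\le$'' half of the one-step control representation, i.e.\ constructing, for each $\varepsilon>0$, an $\mathbb{F}^{W}$-adapted $\theta$ with $E_{P_{\theta}}[\psi(x+X_{h}^{\theta})]\ge\mathbb{\hat{E}}[\psi(x+\sqrt{h}\,\xi_{1})]-\varepsilon$: one must either perform a measurable selection in the HJB equation using the regularity of $w$ (Lipschitz in $x$, $\tfrac12$-H\"{o}lder in $t$, coming from uniform ellipticity of $\Gamma$) or build $\theta$ via a time-discretization mirroring the finitely-many-steps definition of $\mathbb{\hat{E}}$, and in either case keeping the error uniformly small while preserving adaptedness is the delicate point. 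An alternative route avoiding SDEs altogether is to extend $\mathbb{\hat{E}}$ to $C_{b}(\Omega_{T})$, verify Daniell's downward continuity $\xi_{n}\downarrow 0\Rightarrow\mathbb{\hat{E}}[\xi_{n}]\downarrow 0$ (the only non-formal point there, proved via path estimates for $G$-Brownian motion or tightness of the $P_{\theta}$'s), and then invoke Hahn--Banach together with the Daniell--Stone theorem to realize every $\mathbb{\hat{E}}$-dominated linear functional as a countably additive probability measure, with weak compactness of the resulting set again following from the uniform moment bounds.
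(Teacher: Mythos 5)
The paper does not prove this theorem --- it is quoted from \cite{DHP11,HP09} --- and your proposal correctly reconstructs the argument given there: realize $\mathcal{P}$ as the weak closure of the laws of $\int_0^\cdot\theta_s\,dW_s$ with $\theta_s\theta_s^{T}\in\Gamma$, identify $\mathbb{\hat{E}}$ with the control value function via backward induction and uniqueness of viscosity solutions of the $G$-heat/HJB equation, and get weak compactness from BDG moment bounds, Kolmogorov's criterion and Prokhorov's theorem. The delicate points you flag (measurable pasting of $\varepsilon$-optimal controls across partition intervals, interior $C^{1,2}$ regularity for the verification step) are exactly the ones handled in the cited references, so nothing essential is missing.
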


Let $\mathcal{P}$ be a weakly compact set that represents $\mathbb{\hat{E}}$.
For this $\mathcal{P}$, we define capacity%
\[
c(A):=\sup_{P\in\mathcal{P}}P(A),\ A\in\mathcal{B}(\Omega_{T}).
\]
A set $A\subset\Omega_{T}$ is polar if $c(A)=0$. A property holds
\textquotedblleft quasi-surely\textquotedblright\ (q.s. for short) if it holds
outside a polar set. In the following, we do not distinguish two random
variables $X$ and $Y$ if $X=Y$ q.s..

\begin{definition}
\label{def2.9} A process $\{M_{t}\}$ with values in $L_{G}^{1}(\Omega_{T})$ is
called a $G$-martingale if $\mathbb{\hat{E}}_{s}[M_{t}]=M_{s}$ for any $s\leq
t$.
\end{definition}

Let $S_{G}^{0}(0,T)=\{h(t,B_{t_{1}\wedge t},\cdot\cdot\cdot,B_{t_{n}\wedge
t}):t_{1},\ldots,t_{n}\in\lbrack0,T],h\in C_{b,Lip}(\mathbb{R}^{n+1})\}$. For
$p\geq1$ and $\eta\in S_{G}^{0}(0,T)$, set $\Vert\eta\Vert_{S_{G}^{p}}=\{
\mathbb{\hat{E}}[\sup_{t\in\lbrack0,T]}|\eta_{t}|^{p}]\}^{\frac{1}{p}}$.
Denote by $S_{G}^{p}(0,T)$ the completion of $S_{G}^{0}(0,T)$ under the norm
$\Vert\cdot\Vert_{S_{G}^{p}}$.

We consider the following type of $G$-BSDEs (in this paper we always use
Einstein convention):%
\begin{align}
Y_{t}  &  =\xi+\int_{t}^{T}f(s,Y_{s},Z_{s})ds+\int_{t}^{T}g_{ij}(s,Y_{s}%
,Z_{s})d\langle B^{i},B^{j}\rangle_{s}\nonumber\\
&  -\int_{t}^{T}Z_{s}dB_{s}-(K_{T}-K_{t}), \label{e3}%
\end{align}
where%

\[
f(t,\omega,y,z),g_{ij}(t,\omega,y,z):[0,T]\times\Omega_{T}\times
\mathbb{R}\times\mathbb{R}^{d}\rightarrow\mathbb{R}%
\]
satisfy the following properties:

\begin{description}
\item[(H1)] There exists some $\beta>1$ such that for any $y,z$,
$f(\cdot,\cdot,y,z),g_{ij}(\cdot,\cdot,y,z)\in M_{G}^{\beta}(0,T)$;

\item[(H2)] There exists some $L>0$ such that
\[
|f(t,y,z)-f(t,y^{\prime},z^{\prime})|+\sum_{i,j=1}^{d}|g_{ij}(t,y,z)-g_{ij}%
(t,y^{\prime},z^{\prime})|\leq L(|y-y^{\prime}|+|z-z^{\prime}|).
\]

\end{description}

For simplicity, we denote by $\mathfrak{S}_{G}^{\alpha}(0,T)$ the collection
of processes $(Y,Z,K)$ such that $Y\in S_{G}^{\alpha}(0,T)$, $Z\in
H_{G}^{\alpha}(0,T;\mathbb{R}^{d})$, $K$ is a decreasing $G$-martingale with
$K_{0}=0$ and $K_{T}\in L_{G}^{\alpha}(\Omega_{T})$.

\begin{definition}
\label{def3.1} Let $\xi\in L_{G}^{\beta}(\Omega_{T})$ and $f$ satisfy (H1) and
(H2) for some $\beta>1$. A triplet of processes $(Y,Z,K)$ is called a solution
of equation (\ref{e3}) if for some $1<\alpha\leq\beta$ the following
properties hold:

\begin{description}
\item[(a)] $(Y,Z,K)\in\mathfrak{S}_{G}^{\alpha}(0,T)$;

\item[(b)] $Y_{t}=\xi+\int_{t}^{T}f(s,Y_{s},Z_{s})ds+\int_{t}^{T}%
g_{ij}(s,Y_{s},Z_{s})d\langle B^{i},B^{j}\rangle_{s}-\int_{t}^{T}Z_{s}%
dB_{s}-(K_{T}-K_{t})$.
\end{description}
\end{definition}

\begin{theorem}
\label{the4.1} (\cite{HJPS}) Assume that $\xi\in L_{G}^{\beta}(\Omega_{T})$
and $f$, $g_{ij}$ satisfy (H1) and (H2) for some $\beta>1$. Then equation
(\ref{e3}) has a unique solution $(Y,Z,K)$. Moreover, for any $1<\alpha<\beta$
we have $Y\in S_{G}^{\alpha}(0,T)$, $Z\in H_{G}^{\alpha}(0,T;\mathbb{R}^{d})$
and $K_{T}\in L_{G}^{\alpha}(\Omega_{T})$.
\end{theorem}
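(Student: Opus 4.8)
The plan is to run the two-stage scheme familiar from classical BSDE theory, the new feature being that the sublinearity of $\hat{\mathbb{E}}_t$ forces an extra decreasing $G$-martingale $K$ into the picture, so that a pure stochastic-integral representation must be replaced by a representation of the form ``stochastic integral plus decreasing $G$-martingale''. \emph{Step~1 (decoupled data).} First I would treat the case in which $f=f(s,\omega)$ and $g_{ij}=g_{ij}(s,\omega)$ do not depend on $(y,z)$: given $f,g_{ij}$ in $M_G^{\beta'}(0,T)$ for some $\beta'>1$ and $\xi\in L_G^{\beta'}(\Omega_T)$, set $X_t=\int_0^t f(s)\,ds+\int_0^t g_{ij}(s)\,d\langle B^i,B^j\rangle_s$ and $M_t:=\hat{\mathbb{E}}_t[\xi+X_T]$. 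By the tower property $\{M_t\}$ is a $G$-martingale with $M_T\in L_G^{\beta'}(\Omega_T)$, so the representation theorem for $G$-martingales yields $M_t=M_0+\int_0^t Z_s\,dB_s+K_t$ with $Z\in H_G^{\alpha}(0,T;\mathbb{R}^d)$ and $K$ a decreasing $G$-martingale, $K_0=0$, $K_T\in L_G^{\alpha}(\Omega_T)$, for every $1<\alpha<\beta'$. Then $Y_t:=M_t-X_t$ gives a solution $(Y,Z,K)\in\mathfrak{S}_G^{\alpha}(0,T)$ of (\ref{e3}) in this case; uniqueness here reduces to uniqueness of the decomposition of a $G$-martingale into its symmetric part and its decreasing part.

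\emph{Step~2 (a priori estimates).} The technical core, and the step I expect to be the main obstacle, is a package of estimates bounding $\|Y\|_{S_G^{\alpha}}$, $\|Z\|_{H_G^{\alpha}}$ and $\|K_T\|_{L_G^{\alpha}}$ in terms of the norms of the data, together with matching stability estimates for the difference of two solutions, uniformly for $1<\alpha<\beta$. I would obtain these by applying the $G$-It\^{o} formula to $|Y_t|^2$ (or to a weighted power $e^{\gamma t}|Y_t|^{\alpha}$), taking $\hat{\mathbb{E}}$, and then combining: (i) the non-degeneracy $\underline{\sigma}^2>0$, which converts the resulting term $\hat{\mathbb{E}}[(\int_0^T|Z_s|^2\,d\langle B\rangle_s)^{\alpha/2}]$ into an honest bound on $\|Z\|_{H_G^{\alpha}}$; (ii) Burkholder--Davis--Gundy-type inequalities for $G$-It\^{o} integrals; and (iii) Young's inequality to absorb the cross terms. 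The bound on $K_T$ is then read off from the equation itself, $K_T=\xi-Y_0+\int_0^T f\,ds+\int_0^T g_{ij}\,d\langle B^i,B^j\rangle_s-\int_0^T Z_s\,dB_s$, once $Y_0$ and $Z$ are under control. The hard part is that the usual arsenal --- dominated convergence, linear duality, Doob's inequality applied measure by measure --- is largely unavailable under $\hat{\mathbb{E}}$ and must be replaced by capacity/quasi-sure arguments; keeping the constants uniform in the approximating data, and living with the unavoidable loss of exponent ($\alpha<\beta$, never $\alpha=\beta$), is what makes this step delicate.

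\emph{Step~3 (Lipschitz case and conclusion).} With these estimates in hand I would recover the general case by contraction. On the space $S_G^{\alpha}(0,T)\times H_G^{\alpha}(0,T;\mathbb{R}^d)$, equipped with the weighted norm $\big(\hat{\mathbb{E}}[\sup_{t}e^{\gamma t}|Y_t|^{\alpha}]+\hat{\mathbb{E}}[(\int_0^T e^{\gamma s}|Z_s|^2\,ds)^{\alpha/2}]\big)^{1/\alpha}$, define the map $\Gamma(Y,Z)=(Y',Z')$, where $(Y',Z',K')$ is the Step~1 solution associated with the decoupled data $f(s):=f(s,Y_s,Z_s)$, $g_{ij}(s):=g_{ij}(s,Y_s,Z_s)$; by (H1)--(H2) these lie in $M_G^{\alpha'}(0,T)$ for every $\alpha'<\beta$ once $(Y,Z)$ is drawn from $\bigcap_{\alpha'<\beta}(S_G^{\alpha'}\times H_G^{\alpha'})$, a set preserved by $\Gamma$, so Step~1 applies. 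The stability estimates of Step~2 together with the Lipschitz constant in (H2) show $\Gamma$ is a contraction --- either on a short time interval, or globally after taking $\gamma$ large --- so the Banach fixed point theorem produces a solution, with uniqueness coming from the same estimate; in the short-interval version one finally partitions $[0,T]$ into finitely many subintervals and solves backwards piece by piece, matching terminal values. The asserted regularity $Y\in S_G^{\alpha}(0,T)$, $Z\in H_G^{\alpha}(0,T;\mathbb{R}^d)$ and $K_T\in L_G^{\alpha}(\Omega_T)$ for all $1<\alpha<\beta$ is then inherited from Steps~1 and~2. Once the $G$-martingale representation theorem and the a priori estimates are secured, the rest is essentially a faithful transcription of the Pardoux--Peng argument; it is precisely those two ingredients --- and above all the $L_G^{\alpha}$-bound on $K_T$ --- that carry the real weight.
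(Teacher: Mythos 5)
This theorem is quoted from \cite{HJPS} and is not proved in the present paper, so your proposal has to be measured against the proof in that reference. Your uniqueness argument and your Step~2 (a priori estimates for solutions and for differences of solutions, obtained from It\^{o}'s formula applied to powers of $|Y|$, BDG-type inequalities and the non-degeneracy of $G$) do match the structure of \cite{HJPS}, Section~3. Step~1 is also essentially sound, granted Song's decomposition theorem for $G$-martingales (\cite{Song11,Song12}), which gives $M_t=M_0+\int_0^t Z_s\,dB_s+K_t$ with $Z\in H_G^{\alpha}$, $K_T\in L_G^{\alpha}$ for $1\le\alpha<\beta$.

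The genuine gap is in Step~3: the map $\Gamma$ is \emph{not} known to be a contraction, and this is exactly where the Pardoux--Peng scheme breaks down in the $G$-framework. If $(Y^i,Z^i,K^i)$, $i=1,2$, are two outputs of Step~1, the difference $\hat Y=Y^1-Y^2$ satisfies an equation containing $\hat K=K^1-K^2$, the difference of two decreasing $G$-martingales: it has no sign and, since $\mathbb{\hat{E}}$ is only sublinear, it is not itself a $G$-martingale. The cross term $\int\hat Y_s\,d\hat K_s$ produced by It\^{o}'s formula therefore cannot be discarded or absorbed into the left-hand side; the best one can do is bound it via Young's inequality against quantities of the type $\mathbb{\hat{E}}[(|K^1_T|+|K^2_T|)^{\alpha}]$, which are bounded but not small. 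The resulting stability estimates (cf. Propositions 3.8--3.9 of \cite{HJPS}) control $\Vert\hat Y\Vert_{S_G^{\alpha}}$ linearly in the data difference but control $\Vert\hat Z\Vert_{H_G^{\alpha}}^{\alpha}$ only by $C\Vert\hat Y\Vert_{S_G^{\alpha}}^{\alpha/2}$ times the (non-small) norms of the individual solutions; this square-root loss defeats the contraction for every weight $e^{\gamma t}$ and every subdivision of $[0,T]$. That is precisely why \cite{HJPS} prove existence by a different route: for cylindrical data $\xi=\varphi(B_{t_1},\dots,B_{t_n})$ and generators of the analogous form they construct the solution explicitly, interval by interval, from the (Krylov-regularized) solution of the associated fully nonlinear PDE via the nonlinear Feynman--Kac formula, read off $K$ from It\^{o}'s formula and verify directly that it is a decreasing $G$-martingale, and only then pass to general data by approximation, using the a priori estimates of your Step~2 solely to obtain convergence of the approximating solutions. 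Unless you can produce a stability estimate for $\hat Z$ that is genuinely linear in the data difference --- which is not a routine adaptation but the core open difficulty --- your Step~3 does not close.
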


\bigskip

\subsection{Comparison theorem of SDEs}

Let $\tau\in\lbrack0,T]$ and $\eta\in L_{G}^{2}(\Omega_{\tau})$, we consider
the following type SDE:%
\begin{equation}
X_{t}=\eta+\int_{\tau}^{t}b(s,X_{s})ds+\int_{\tau}^{t}h(s,X_{s})d\langle
B\rangle_{s}+\int_{\tau}^{t}\sigma(s,X_{s})dB_{s}+V_{t}-V_{\tau}, \label{4-1}%
\end{equation}
where $b$, $h$, $\sigma$ are given functions satisfying $b(\cdot,x)$,
$h(\cdot,x)$, $\sigma(\cdot,x)\in M_{G}^{2}(\tau,T)$ for each $x\in\mathbb{R}$
and the Lipschitz condition, i.e.,%
\[
|b(t,x)-b(t,x^{\prime})|+|h(t,x)-h(t,x^{\prime})|+|\sigma(t,x)-\sigma
(t,x^{\prime})|\leq K|x-x^{\prime}|;
\]
$(V_{t})_{t\in\lbrack\tau,T]}$ is a given RCLL process such that
$\mathbb{\hat{E}}[\sup_{t\in\lbrack\tau,T]}|V_{t}|^{2}]<\infty$. Peng
\cite{P10} proved that the above SDE has a unique solution $X\in M_{G}%
^{2}(\tau,T)$.

\begin{theorem}
Let $(X_{t}^{i})_{t\in\lbrack\tau,T]}$, $i=1,2,$ be the solutions of the
following SDEs:%
\[
X_{t}^{i}=\eta^{i}+\int_{\tau}^{t}b_{i}(s,X_{s}^{i})ds+\int_{\tau}^{t}%
h_{i}(s,X_{s}^{i})d\langle B\rangle_{s}+\int_{\tau}^{t}\sigma(s,X_{s}%
^{i})dB_{s}+V_{t}^{i}-V_{\tau}^{i}.
\]
If $\eta^{1}\geq\eta^{2}$, $b_{1}\geq b_{2}$, $h_{1}\geq h_{2}$, $V_{t}%
^{1}-V_{t}^{2}$ is an increasing process, then $X_{t}^{1}\geq X_{t}^{2}$.
\end{theorem}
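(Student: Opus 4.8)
The plan is to establish the inequality by a change of variables that removes the forward SDE structure, turning the comparison into one amenable to a Gronwall-type argument, combined with a localization on the $G$-Brownian motion driver. First I would form the difference process $\hat{X}_t := X_t^1 - X_t^2$, which satisfies an SDE of the form
\[
\hat{X}_t = \hat{\eta} + \int_\tau^t \hat{b}(s)\,ds + \int_\tau^t \hat{h}(s)\,d\langle B\rangle_s + \int_\tau^t \hat{\sigma}(s)\,dB_s + \hat{V}_t - \hat{V}_\tau,
\]
where $\hat{\eta} = \eta^1 - \eta^2 \geq 0$, $\hat{V}$ is increasing, $\hat{\sigma}(s) = \sigma(s,X_s^1) - \sigma(s,X_s^2)$, and the drift terms decompose as $\hat{b}(s) = [b_1(s,X_s^1) - b_1(s,X_s^2)] + [b_1(s,X_s^2) - b_2(s,X_s^2)]$, the first bracket Lipschitz-controlled by $|\hat{X}_s|$ and the second nonnegative since $b_1 \geq b_2$; similarly for $\hat{h}$. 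The idea is to bound $\mathbb{\hat{E}}[(\hat{X}_t^-)^2]$ or, more robustly, to apply It\^o's formula to $(\hat{X}_t^-)^2$ (using a smooth approximation of $x \mapsto (x^-)^2$) and show the negative part stays at zero.

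The key steps, in order, are as follows. (1) Write the SDE for $\hat{X}$ and record the sign properties of $\hat{\eta}$, the extra drift pieces, and $\hat{V}$. (2) Apply It\^o's formula for $G$-Brownian motion to $\varphi(\hat{X}_t)$ where $\varphi$ is a $C^2$ convex approximation of $(x^-)^2$ with $\varphi' \leq 0$ and $\varphi'(x) = 0$ for $x \geq 0$: then the contribution from $\hat{V}$ has the sign $\varphi'(\hat{X}_s)\,d\hat{V}_s \leq 0$ (increasing $\hat{V}$ times nonpositive $\varphi'$), the contributions from the nonnegative extra drift terms in $\hat{b},\hat{h}$ also have nonpositive sign, the $dB_s$ term vanishes in $G$-expectation after checking integrability, and the $d\langle B\rangle_s$ terms coming from the second-order It\^o term $\tfrac12\varphi''(\hat{X}_s)\hat{\sigma}(s)^2$ and from the Lipschitz parts of $\hat{b},\hat{h}$ are bounded by $C|\hat{X}_s|^2$ using the Lipschitz constant $K$ together with the bound $d\langle B\rangle_s \leq \bar\sigma^2 ds$ (equivalently, estimating under the representing set $\mathcal{P}$). (3) Take $\mathbb{\hat{E}}$, pass to the limit in the approximation to obtain $\mathbb{\hat{E}}[(\hat{X}_t^-)^2] \leq C\int_\tau^t \mathbb{\hat{E}}[(\hat{X}_s^-)^2]\,ds$ — here one needs that the Lipschitz terms, when multiplied by $\varphi'(\hat{X}_s) \le 0$ which is supported on $\{\hat X_s < 0\}$, produce exactly $|\hat X_s^-|^2$ up to constants — and conclude $(\hat{X}_t^-)^2 = 0$ q.s. by Gronwall, i.e. $X_t^1 \geq X_t^2$ q.s.

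I expect the main obstacle to be step (2): handling the $d\langle B\rangle_s$ terms rigorously in the $G$-framework, since $\langle B\rangle$ is not absolutely continuous with a deterministic density but only satisfies $\underline\sigma^2\,ds \leq d\langle B\rangle_s \leq \bar\sigma^2\,ds$ q.s. One clean way around this is to argue $P$-by-$P$ over the representing set $\mathcal{P}$: under each $P \in \mathcal{P}$ the process $B$ is a martingale with absolutely continuous quadratic variation, so the classical comparison/Gronwall argument applies and yields $X_t^1 \geq X_t^2$ $P$-a.s.; since this holds for every $P \in \mathcal{P}$, it holds q.s. A secondary technical point is the integrability needed to kill the stochastic integral term and to justify the It\^o expansion and the limit in the convex approximation — these follow from $\eta^i \in L_G^2$, the Lipschitz assumptions placing $b_i(\cdot,X^i),h_i(\cdot,X^i),\sigma(\cdot,X^i)$ in $M_G^2(\tau,T)$, the a priori estimate $X^i \in M_G^2(\tau,T)$ quoted from Peng, and $\mathbb{\hat{E}}[\sup_t|V_t^i|^2] < \infty$, which together give $\hat{X} \in M_G^2$ and enough uniform integrability to exchange limits and expectation.
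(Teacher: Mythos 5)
Your proposal is correct in outline, but it is a genuinely different proof from the paper's. The paper does not take expectations at all: it linearizes the coefficient differences, writing e.g. $\hat b_s=b_s^{\varepsilon}\hat X_s+m_s+m_s^{\varepsilon}$ with $m_s=b_1(s,X_s^2)-b_2(s,X_s^2)\geq0$, $|b_s^{\varepsilon}|\leq K$ and $|m_s^{\varepsilon}|\leq 2K\varepsilon$, where a cutoff function $l$ with $I_{[-\varepsilon,\varepsilon]}\leq l\leq I_{[-2\varepsilon,2\varepsilon]}$ avoids dividing by $\hat X_s$ near zero; it then multiplies by an explicit exponential integrating factor $\Gamma^{\varepsilon}$ solving a linear $G$-SDE, applies It\^{o}'s formula to $\Gamma_t^{\varepsilon}\hat X_t$, drops the nonnegative terms, and obtains a pathwise lower bound for $\hat X_t$ by a remainder of order $\varepsilon$, which vanishes as $\varepsilon\rightarrow0$. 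Your Ikeda--Watanabe-style argument via a smooth approximation of $(x^{-})^{2}$ plus Gronwall is the classical alternative, and it does work here because the two equations share the same Lipschitz diffusion coefficient $\sigma$, so the second-order term is controlled by $(\hat X_s^{-})^{2}$. The trade-off is exactly where you locate it: staying pathwise, the paper never needs $\mathbb{\hat E}\bigl[\int\varphi'(\hat X_s)\hat\sigma_s\,dB_s\bigr]=0$ nor a Fubini-type inequality for $\mathbb{\hat E}$, whereas in your route the integrand $\varphi'(\hat X_s)\hat\sigma_s$ is only of quadratic growth in $\hat X_s$, hence a priori in $M_G^{1}$ rather than $M_G^{2}$ under the stated $L_G^{2}$ data, so the vanishing of that term genuinely requires either localization or your $P$-by-$P$ reduction. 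The $P$-by-$P$ argument over the representing set $\mathcal{P}$ is legitimate and clean (q.s.\ is precisely ``$P$-a.s.\ for every $P\in\mathcal{P}$''), but you should explicitly invoke the aggregation fact that the $G$-It\^{o} integrals are versions of the classical It\^{o} integrals under each $P\in\mathcal{P}$; and in either proof one should note that the conclusion for each fixed $t$ upgrades to all $t\in[\tau,T]$ simultaneously by path regularity.
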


\begin{proof}
We have%
\[
\hat{X}_{t}=\hat{\eta}+\int_{\tau}^{t}\hat{b}_{s}ds+\int_{\tau}^{t}\hat{h}%
_{s}d\langle B\rangle_{s}+\int_{\tau}^{t}\hat{\sigma}_{s}dB_{s}+\hat{V}%
_{t}-\hat{V}_{\tau},
\]
where $\hat{X}_{t}=X_{t}^{1}-X_{t}^{2}$, $\hat{\eta}=\eta^{1}-\eta^{2}$,
$\hat{b}_{s}=b_{1}(s,X_{s}^{1})-b_{2}(s,X_{s}^{2})$, $\hat{h}_{s}%
=h_{1}(s,X_{s}^{1})-h_{2}(s,X_{s}^{2})$, $\hat{\sigma}_{s}=\sigma(s,X_{s}%
^{1})-\sigma(s,X_{s}^{2})$, $\hat{V}_{t}=V_{t}^{1}-V_{t}^{2}$. For each given
$\varepsilon>0$, we can choose Lipschitz function $l(\cdot)$ such that
$I_{[-\varepsilon,\varepsilon]}\leq l(x)\leq I_{[-2\varepsilon,2\varepsilon]}%
$. Thus we have%
\[
b_{1}(s,X_{s}^{1})-b_{1}(s,X_{s}^{2})=(b_{1}(s,X_{s}^{1})-b_{1}(s,X_{s}%
^{2}))l(\hat{X}_{s})+b_{s}^{\varepsilon}\hat{X}_{s},
\]
where $b_{s}^{\varepsilon}=(1-l(\hat{X}_{s}))(b_{1}(s,X_{s}^{1})-b_{1}%
(s,X_{s}^{2}))\hat{X}_{s}^{-1}\in M_{G}^{2}(\tau,T)$ such that $|b_{s}%
^{\varepsilon}|\leq K$. It is easy to verify that%
\[
|(b_{1}(s,X_{s}^{1})-b_{1}(s,X_{s}^{2}))l(\hat{X}_{s})|\leq K|\hat{X}%
_{s}|l(\hat{X}_{s})\leq2K\varepsilon.
\]
Thus we can get%
\[
\hat{b}_{s}=b_{s}^{\varepsilon}\hat{X}_{s}+m_{s}+m_{s}^{\varepsilon},\ \hat
{h}_{s}=h_{s}^{\varepsilon}\hat{X}_{s}+n_{s}+n_{s}^{\varepsilon},\hat{\sigma
}_{s}=\sigma_{s}^{\varepsilon}\hat{X}_{s}+l_{s}^{\varepsilon},
\]
where $|m_{s}^{\varepsilon}|\leq2K\varepsilon$, $|n_{s}^{\varepsilon}%
|\leq2K\varepsilon$, $|l_{s}^{\varepsilon}|\leq2K\varepsilon$, $m_{s}%
=b_{1}(s,X_{s}^{2})-b_{2}(s,X_{s}^{2})\geq0$ and $n_{s}=h_{1}(s,X_{s}%
^{1})-h_{2}(s,X_{s}^{2})\geq0$. Let $\Gamma_{t}^{\varepsilon}$ be the solution
of the following SDE:%
\[
\Gamma_{t}^{\varepsilon}=1-\int_{\tau}^{t}b_{s}^{\varepsilon}\Gamma
_{s}^{\varepsilon}ds-\int_{\tau}^{t}[h_{s}^{\varepsilon}-(\sigma
_{s}^{\varepsilon})^{2}]\Gamma_{s}^{\varepsilon}d\langle B\rangle_{s}%
-\int_{\tau}^{t}\sigma_{s}^{\varepsilon}\Gamma_{s}^{\varepsilon}dB_{s}.
\]
By applying It\^{o}'s formula to $\Gamma_{t}^{\varepsilon}\hat{X}_{t}$, we
obtain that%
\[
\hat{X}_{t}\geq(\Gamma_{t}^{\varepsilon})^{-1}[\int_{\tau}^{t}m_{s}%
^{\varepsilon}\Gamma_{s}^{\varepsilon}ds+\int_{\tau}^{t}(n_{s}^{\varepsilon
}-\sigma_{s}^{\varepsilon}l_{s}^{\varepsilon})\Gamma_{s}^{\varepsilon}d\langle
B\rangle_{s}+\int_{\tau}^{t}l_{s}^{\varepsilon}\Gamma_{s}^{\varepsilon}%
dB_{s}].
\]
Note that $\Gamma_{t}^{\varepsilon}=\exp(-\int_{\tau}^{t}b_{s}^{\varepsilon
}ds-\int_{\tau}^{t}[h_{s}^{\varepsilon}-\frac{1}{2}(\sigma_{s}^{\varepsilon
})^{2}]d\langle B\rangle_{s}-\int_{\tau}^{t}\sigma_{s}^{\varepsilon}dB_{s})$,
thus we can get $\hat{X}_{t}\geq0$ by letting $\varepsilon\rightarrow0$.
\end{proof}

\subsection{Girsanov transformation}

We consider the following $G$-BSDE:%
\[
Y_{t}=\xi+\int_{t}^{T}b_{s}Z_{s}ds+\int_{t}^{T}d_{s}Z_{s}d\langle B\rangle
_{s}-\int_{t}^{T}Z_{s}dB_{s}-(K_{T}-K_{t}),
\]
where $(b_{t})_{t\leq T}$ and $(d_{t})_{t\leq T}$ are bounded processes. For
each $\xi\in L_{G}^{\beta}(\Omega_{T})$ with $\beta>1$, define%
\[
\mathbb{\tilde{E}}_{t}[\xi]=Y_{t}.
\]
By Theorem 5.1 in \cite{HJPS-1}, we know that $\mathbb{\tilde{E}}_{t}[\cdot]$
is a consistent sublinear expectation.

\begin{theorem}
(\cite{HJPS-1}) Let $(b_{t})_{t\leq T}$ and $(d_{t})_{t\leq T}$ be bounded
processes. Then

\begin{description}
\item[(1)] $\tilde{B}_{t}:=B_{t}-\int_{0}^{t}b_{s}ds-\int_{0}^{t}d_{s}d\langle
B\rangle_{s}$ is a $G$-Brownian motion under $\mathbb{\tilde{E}}$;

\item[(2)] for any decreasing $G$-martingale $\tilde{K}$ with $\tilde{K}%
_{0}=0$ and $\tilde{K}_{T}\in L_{G}^{\beta}(\Omega_{T})$ for some $\beta>1$,
we have $\mathbb{\tilde{E}}_{t}[\tilde{K}_{T}]=\tilde{K}_{t}$;

\item[(3)] the quadratic variation process of $\tilde{B}$ under
$\mathbb{\tilde{E}}$ equals to $\langle B\rangle$.
\end{description}
\end{theorem}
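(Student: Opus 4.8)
The plan is to handle the three assertions in increasing order of difficulty: (2) is immediate from the uniqueness in Theorem~\ref{the4.1}, (3) reduces to a pathwise statement, and (1) is obtained from the representation of $\mathbb{\hat{E}}$ by a family of probability measures together with a classical (Cameron--Martin) change of drift. For (2), let $\tilde{K}$ be a decreasing $G$-martingale with $\tilde{K}_{0}=0$ and $\tilde{K}_{T}\in L_{G}^{\beta}(\Omega_{T})$. Since $\tilde{K}$ is nonincreasing and starts at $0$, $\sup_{t\leq T}|\tilde{K}_{t}|=|\tilde{K}_{T}|\in L_{G}^{\alpha}(\Omega_{T})$ for every $\alpha\in(1,\beta)$, so the triple $(Y_{t},Z_{t},K_{t}):=(\tilde{K}_{t},0,\tilde{K}_{t})$ lies in $\mathfrak{S}_{G}^{\alpha}(0,T)$; moreover it solves the $G$-BSDE defining $\mathbb{\tilde{E}}$ with terminal value $\tilde{K}_{T}$, because $\tilde{K}_{T}+0+0-0-(\tilde{K}_{T}-\tilde{K}_{t})=\tilde{K}_{t}$. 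By the uniqueness part of Theorem~\ref{the4.1}, $\mathbb{\tilde{E}}_{t}[\tilde{K}_{T}]=Y_{t}=\tilde{K}_{t}$.

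For (3), observe that $\tilde{B}_{t}-B_{t}=-\int_{0}^{t}b_{s}\,ds-\int_{0}^{t}d_{s}\,d\langle B\rangle_{s}$ is a continuous process of finite variation, quasi-surely (the first summand is absolutely continuous in $t$, the second a Stieltjes integral against the nondecreasing $\langle B\rangle$, using that $b,d$ are bounded). Hence along any refining sequence of partitions the sums $\sum_{j}(\tilde{B}_{t_{j+1}^{N}}-\tilde{B}_{t_{j}^{N}})^{2}$ have the same limit as $\sum_{j}(B_{t_{j+1}^{N}}-B_{t_{j}^{N}})^{2}$, so $\langle\tilde{B}\rangle=\langle B\rangle$, $\mathbb{\hat{E}}$-q.s.; since the representing family of $\mathbb{\tilde{E}}$ built below consists of measures absolutely continuous with respect to members of $\mathcal{P}$, this identity also holds $\mathbb{\tilde{E}}$-q.s. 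The point underlying (1) is that substituting $dB_{s}=d\tilde{B}_{s}+b_{s}\,ds+d_{s}\,d\langle B\rangle_{s}$ into the defining $G$-BSDE cancels the two drift terms and rewrites it as $Y_{t}=\xi-\int_{t}^{T}Z_{s}\,d\tilde{B}_{s}-(K_{T}-K_{t})$, so $\tilde{B}$ should play under $\mathbb{\tilde{E}}$ exactly the role that $B$ plays under $\mathbb{\hat{E}}$.

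To make this rigorous I would use Theorem~\ref{the2.7}: under each $P\in\mathcal{P}$, $B$ is a continuous martingale with $d\langle B\rangle_{s}=a_{s}^{P}\,ds$, the symmetric matrix $a_{s}^{P}$ ranging over the bounded, non-degenerate set attached to $G$, whence $\int_{0}^{t}d_{s}\,d\langle B\rangle_{s}=\int_{0}^{t}d_{s}a_{s}^{P}\,ds$ and $\tilde{B}_{t}=B_{t}-\int_{0}^{t}(b_{s}+d_{s}a_{s}^{P})\,ds$. Under the change of measure $dP^{\ast}=\mathcal{E}_{T}^{P}\,dP$ with $\mathcal{E}^{P}=\exp(\int_{0}^{\cdot}\theta_{s}\,dB_{s}-\tfrac{1}{2}\int_{0}^{\cdot}\theta_{s}^{\top}a_{s}^{P}\theta_{s}\,ds)$ and $\theta_{s}=(a_{s}^{P})^{-1}(b_{s}+d_{s}a_{s}^{P})$ (invertible by non-degeneracy), $\tilde{B}$ is a continuous $P^{\ast}$-martingale with $\langle\tilde{B}\rangle=\langle B\rangle$; boundedness of $b$, $d$ and of $a^{P}$ makes $\theta$ bounded, so $\mathcal{E}^{P}$ is a true martingale and $P^{\ast}$ a probability measure. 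As $P$ runs over $\mathcal{P}$ the resulting family $\mathcal{P}^{\ast}$ has, under each of its members, $\tilde{B}$ a martingale whose quadratic-variation density covers the same set as that of $B$ under $\mathcal{P}$; identifying $\mathbb{\tilde{E}}_{t}[\cdot]$ with $\sup_{P^{\ast}\in\mathcal{P}^{\ast}}E_{P^{\ast}}[\,\cdot\mid\mathcal{F}_{t}]$, via the representation formula for the linear $G$-BSDE defining $\mathbb{\tilde{E}}$, then yields that $\tilde{B}$ is a $G$-Brownian motion under $\mathbb{\tilde{E}}$ with the same generating function $G$, and re-proves (3).

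The main obstacle is precisely this last identification $\mathbb{\tilde{E}}=\sup_{P^{\ast}\in\mathcal{P}^{\ast}}E_{P^{\ast}}$: proving both inequalities (the ``$\leq$'' direction from the a priori estimates/comparison for the $G$-BSDE, the ``$\geq$'' direction from a verification argument using part (2) to handle the decreasing $G$-martingale), and checking that $\mathcal{P}^{\ast}$ represents a $G$-expectation; the change-of-measure bookkeeping and the ``true martingale'' check are routine once boundedness and non-degeneracy are in hand. Alternatively, since Theorem~5.1 of \cite{HJPS-1} already gives that $\mathbb{\tilde{E}}$ is a consistent sublinear expectation, one may instead verify Peng's characterization of $G$-Brownian motion directly: for $\varphi\in C_{b}^{2}(\mathbb{R}^{d})$ with bounded derivatives, $\varphi(\tilde{B}_{t})-\int_{0}^{t}G(D^{2}\varphi(\tilde{B}_{s}))\,ds$ should be an $\mathbb{\tilde{E}}$-martingale, which, using $\langle\tilde{B}\rangle=\langle B\rangle$ from (3) and part (2) to absorb the decreasing-$G$-martingale remainder, reduces to the corresponding known statement for $B$ under $\mathbb{\hat{E}}$.
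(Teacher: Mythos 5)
The paper itself proves only part (3) and defers (1) and (2) to \cite{HJPS-1}, so that is the benchmark. Your proof of (2) is correct and is essentially the standard argument: $(\tilde{K}_t,0,\tilde{K}_t)$ solves the defining linear $G$-BSDE with terminal value $\tilde{K}_T$, and uniqueness (Theorem \ref{the4.1}) gives $\mathbb{\tilde{E}}_t[\tilde{K}_T]=\tilde{K}_t$; the one point you gloss is that $\tilde{K}\in S_G^{\alpha}(0,T)$ means membership in the completion, not merely finiteness of the norm $\mathbb{\hat{E}}[\sup_t|\tilde{K}_t|^{\alpha}]$ --- this does hold, by the known regularity of $G$-martingales with terminal value in $L_G^{\beta}$, but it deserves a citation rather than the one-line bound. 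For (3), your computation of the limit of the approximating sums is the same as the paper's first step (the cross terms vanish because $\tilde{B}-B$ has finite variation, so the sums converge to $\langle B\rangle_t$ in $L^2(\mathbb{\hat{E}})$), but you transfer this convergence to $\mathbb{\tilde{E}}$ by appealing to absolute continuity of a representing family for $\mathbb{\tilde{E}}$ --- a family you only construct in your (incomplete) treatment of (1), which makes the argument circular as organized. The paper avoids this by invoking Proposition 3.7 of \cite{HJPS}, an a priori estimate for the linear $G$-BSDE that converts $L^2(\mathbb{\hat{E}})$-convergence directly into $L^{\beta}(\mathbb{\tilde{E}})$-convergence for some $\beta\in(1,2)$, with no measure representation needed.

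The genuine gap is in (1). Your per-measure Cameron--Martin computation (producing, for each $P\in\mathcal{P}$, a measure $P^{\ast}$ under which $\tilde{B}$ is a continuous martingale with the same quadratic-variation density) is sound, but it only exhibits a candidate family; the theorem requires identifying $\mathbb{\tilde{E}}_t$ with $\sup_{P^{\ast}}E_{P^{\ast}}[\,\cdot\mid\mathcal{F}_t]$, or else verifying Peng's martingale characterization of $G$-Brownian motion under $\mathbb{\tilde{E}}$, and you explicitly flag this identification as the ``main obstacle'' without carrying it out. That identification is precisely the content of the Girsanov theorem of \cite{HJPS-1} and is not routine: it requires the representation and comparison machinery for the linear $G$-BSDE together with part (2) to absorb the decreasing $G$-martingale. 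Since the paper simply cites \cite{HJPS-1} for (1) and (2), you have not contradicted anything the paper does, but as written your (1) is a strategy sketch rather than a proof, and your (3) inherits a dependence on it that the paper's own argument does not have.
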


\begin{proof}
(1) and (2) can be found in \cite{HJPS-1}. We only prove (3). For each fixed
$t>0$, it is easy to check that%
\[
\lim_{n\rightarrow\infty}\mathbb{\hat{E}}[|\sum_{i=0}^{n-1}|\tilde{B}%
_{\frac{i+1}{n}t}-\tilde{B}_{\frac{i}{n}t}|^{2}-\langle B\rangle_{t}|^{2}]=0.
\]
By Proposition 3.7 in \cite{HJPS}, we can get $\mathbb{\tilde{E}}[|\sum
_{i=0}^{n-1}|\tilde{B}_{\frac{i+1}{n}t}-\tilde{B}_{\frac{i}{n}t}|^{2}-\langle
B\rangle_{t}|^{\beta}]\rightarrow0$ as $n\rightarrow\infty$ for some $\beta
\in(1,2)$. On the other hand, $\mathbb{\tilde{E}}[|\sum_{i=0}^{n-1}|\tilde
{B}_{\frac{i+1}{n}t}-\tilde{B}_{\frac{i}{n}t}|^{2}-\langle\tilde{B}\rangle
_{t}|^{\beta}]\rightarrow0$ as $n\rightarrow\infty$. Thus $\langle\tilde
{B}\rangle_{t}=\langle B\rangle_{t}$ under $\mathbb{\tilde{E}}$.
\end{proof}

\end{document}